\newcommand{\puteps}[2][0.5]
{\includegraphics[scale=#1]{#2.eps}}
\newtheorem{Theorem}{Theorem}[section]
\newtheorem{Proposition}[Theorem]{Proposition} 
\newtheorem{Lemma}[Theorem]{Lemma}
\newtheorem{Corollary}[Theorem]{Corollary}
\theoremstyle{definition}
\newtheorem{Remark}[Theorem]{Remark}
\newcommand{\smccbub}[1]{
\xybox{%
 (-5,0)*{};
  (5,0)*{};
  (-2,0)*{}="t1";
  (2,0)*{}="t2";
  "t2";"t1" **\crv{(2,3) & (-2,3)}; ?(.05)*\dir{>} ?(1)*\dir{>}; ?(.3)*\dir{}+(0,0)*{\bullet}+(1,2)*{\scs
  {#1}};
  "t2";"t1" **\crv{(2,-3) & (-2,-3)};
}}
\newcommand{\Ucapr}{\;\;
    \vcenter{\xy (-2,-1)*{}; (2,-1)*{} **\crv{(-2,3) & (2,3)}?(1)*\dir{>};
            (2,-3)*{};(-2,3)*{}; \endxy} \;\; }
\newcommand{\sUup}{
    \xy {\ar (0,-2)*{};(0,2)*{} };(1.5,0)*{};(-1.5,0)*{};\endxy}
\newcommand{\sUdown}{
    \xy {\ar (0,2)*{};(0,-2)*{} };(1.5,0)*{};(-1.5,0)*{};\endxy}
\newcommand{\sUupdot}{
   \xy {\ar (0,-2)*{};(0,2)*{} };(0,-0.5)*{\scs \bullet};(1.5,0)*{};(-1.5,0)*{};\endxy}
\newcommand{\sUdowndot}{
   \xy {\ar (0,2)*{};(0,-2)*{} };(0,0.5)*{\scs \bullet};(1.5,0)*{};(-1.5,0)*{};\endxy}
\newcommand{\scs}{\scriptstyle}
\newcommand{\qbins}[2]{
\left[
 \begin{array}{c}
 \scs #1 \\
 \scs #2 \\
 \end{array}
 \right]
}
\def\p{{\partial}}
\def\K{{\mathcal{K}}}
\def\hbeta{\widehat{\beta}}
\def\Spec{{\mbox{Spec }}}
\def\Sym{{\mbox{Sym}}}
\def\adj{{\rm{adj}}}
\def\0{{\underline{0}}}
\def\u2{{\underline{2}}}
\def\uk{{\underline{k}}}
\def\A{{\mathbb{A}}}
\def\Kom{{\sf{Kom}}}
\def\1{{\mathbf{1}}}
\def\P{{\sf{P}}}
\def\k{{\Bbbk}}
\def\id{{\mathrm{id}}}
\def\l{{\lambda}}
\def\sl{\mathfrak{sl}}
\def\gl{\mathfrak{gl}}
\def\g{\mathfrak{g}}
\newcommand{\Z}{\mathbb{Z}}
\newcommand{\C}{\mathbb{C}}
\def\O{{\mathcal O}}
\def\sfA{{\sf{A}}}
\def\sfB{{\sf{B}}}
\def\sA{{\mathcal{A}}}
\def\sB{{\mathcal{B}}}
\def\sE{{\mathcal{E}}}
\def\sF{{\mathcal{F}}}
\def\sL{{\mathcal{L}}}
\def\sH{{\mathcal{H}}}
\def\sT{{\mathcal{T}}}
\def\sP{{\mathcal{P}}}
\def\sM{{\mathcal{M}}}
\def\sN{{\mathcal{N}}}
\def\sQ{{\mathcal{Q}}}
\def\t{{\theta}}
\def\N{\mathbb N}
\newcommand{\E}{\mathsf{E}}
\newcommand{\F}{\mathsf{F}}
\newcommand{\T}{\mathsf{T}}
\newcommand{\la}{\langle}
\newcommand{\ra}{\rangle}
\newcommand{\Ext}{\operatorname{Ext}}
\newcommand{\Tor}{\operatorname{Tor}}
\newcommand{\Hom}{\operatorname{Hom}}
\newcommand{\End}{\operatorname{End}}
\newcommand{\spn}{\operatorname{span}}
\begin{document}
\setcounter{tocdepth}{1}

\title[Remarks on coloured triply graded link invariants]{Remarks on coloured triply graded link invariants}

\author{Sabin Cautis}
\email{cautis@math.ubc.ca}
\address{Department of Mathematics\\ University of British Columbia \\ Vancouver BC, Canada}

\begin{abstract}
We explain how existing results (such as categorical $\sl_n$ actions, associated braid group actions and infinite twists) can be used to define a triply graded link invariant which categorifies the HOMFLY polynomial of links coloured by arbitrary partitions. The construction uses a categorified HOMFLY clasp defined via cabling and infinite twists. We briefly discuss differentials and speculate on related structures. 
\end{abstract}

\maketitle
\tableofcontents
\section{Introduction}

In \cite{KR1,K} Khovanov and Rozansky defined a triply graded link invariant using matrix factorizations and subsequently Soergel bimodules. In their case the link is coloured by the partition $(1)$ and the invariant categorifies the HOMFLY polynomial. In this paper we explain how existing tools can be used to extend this construction to links coloured by arbitrary partitions which categorifies the coloured HOMFLY polynomial. 

The idea is as follows. First one defines a 2-category $\K_n$ out of Soergel bimodules and constructs a categorical $(\sl_n,\t)$ action on it (Sections \ref{sec:gactions} and \ref{sec:Kn}). Combining this action with a trace 2-functor (Hochschild (co)homology) one obtains a triply graded invariant for links coloured by partitions with only one part $(k)$ for $k \in \N$. 

Finally, to deal with an arbitrary partition $(k_1, \dots, k_i)$ one cables together $i$ strands labeled $k_1, \dots, k_i$ and composes with a certain projector $\P^-$. We will call these (categorified) HOMFLY clasps to differentiate them from those in the Reshetikhin-Turaev context (RT clasps). Apart from some general results on $(\sl_n,\t)$ actions and associated braid group actions and projectors (see for instance \cite{C1}) I have tried to keep this paper self-contained. Some example computations are worked out in Section \ref{sec:examples}.  

There are many papers in the literature on coloured HOMFLY homology and it is difficult to list them all without forgetting some. We try to recall some of the ones which are more closely related to this paper. 

There are several papers defining various generalities of triply graded homologies. In \cite{MSV} Mackaay, Sto\v{s}i\'{c} and Vaz work out the case of links labeled by the one part partition $(2)$. In \cite{WW} Webster and Williamson define a triply graded homology of links coloured by partitions with only one part. Their construction, which is geometric, is related to ours via the equivalence between perverse sheaves on finite flag varieties and (singular) Soergel bimodules. The same relationship appears (and is briefly discussed) in \cite{CDK}. More recently, Wedrich \cite{W} examines these constructions in the ``reduced'' case as well as some associated spectral sequences.  

The papers \cite{H,AH} discuss the categorified HOMFLY clasps for partitions with parts of size at most one (i.e. coloured with $(1^k)$ for $k \in \N$). As with our projectors, these are build as infinite twists. As far as I understand, Elias and Hogancamp aim to develop a more systematic, more general construction of such projectors. This will hopefully shed some light on the projectors $\P^-$ and the various properties they (are expected to) satisfy. 

In \cite{DGR,Ra} it was conjectured (and partially proved) that there exist certain differentials on triply graded link homology which recovers $SL(N)$ link homology. In Section \ref{sec:differentials} we discuss a differential $d_N$ for $N > 0$ which gives rise to an $SL(N)$-type link invariant. Somewhat surprising, the resulting homology seems to be finite-dimensional while categorifying $SL(N)$-representations of the form $\Sym^k(\C^N)$. A homology of this form is predicted by the physical interpretation of knot homologies as spaces of open BPS states (see for instance \cite{GS}) but does not show up in our earlier work on knot homologies (\cite{CK1} and subsequent papers). In Section \ref{sec:speculation} we also speculate on defining differentials $d_N$ for $N < 0$ which should categorify $SL(N)$-representations of the form $\Lambda^k(\C^N)$. 

The author was supported by an NSERC discovery/accelerator grant.

\section{Background: $(\sl_n,\t)$ actions, braid group actions and projectors}\label{sec:gactions}

\subsection{Notation}\label{sec:notation}
We work over an arbitrary field $\k$. By a graded 2-category $\K$ we mean a 2-category whose 1-morphisms are equipped with an auto-equivalence $\la 1 \ra$ (so graded means $\Z$-graded).  We say $\K$ is idempotent complete if for any 2-morphism $f$ with $f^2=f$ the image of $f$ exists in $\K$. 

For $n \ge 1$ we denote by $[n]$ the quantum integer $q^{n-1} + q^{n-3} + \dots + q^{-n+3} + q^{-n+1}$ where $q$ is a formal variable. By convention, for negative entries we let $[-n] = -[n]$. Moreover $[n]! := [1][2] \dots [n]$ and 
$\qbins{n}{k} := \frac{[n]!}{[k]![n-k]!}.$

If $f = f_aq^a \in {\N}[q,q^{-1}]$ and ${\sfA}$ is a 1-morphism inside a graded 2-category $\K$ then we write $\oplus_f {\sfA}$ for the direct sum $\oplus_{s \in \Z} {\sfA}^{\oplus f_s} \la s \ra$. For example, 
$$\bigoplus_{[n]} {\sfA} = {\sfA} \la n-1 \ra \oplus {\sfA} \la n-3 \ra \oplus \dots \oplus {\sfA} \la -n+3 \ra \oplus {\sfA} \la -n+1 \ra.$$
We will always assume $\N$ contains $0$. Moreover, we will write $\End^i({\sfA})$ as shorthand for $\Hom({\sfA}, {\sfA} \la i \ra)$ where $i \in \Z$. 

Finally, if $\gamma_i: \sfA_i \rightarrow \sfB_i$ is a sequence of 2-morphisms in $\K$ for $i=1, \dots, k$ we will write $\gamma_1 \dots \gamma_k: \sfA_1 \dots \sfA_k \rightarrow \sfB_1 \dots \sfB_k$ for the corresponding 2-morphism. We will denote by $I$ the identity 2-morphism. 

\subsection{Categorical actions}
$(\g,\t)$ actions were introduced in \cite{C2} in order to simplify some of the earlier definitions from \cite{KL,Rou,CL}. A $(\g,\t)$ action involves a target graded, additive, $\k$-linear, idempotent complete 2-category $\K$ whose objects are indexed by the weight lattice of $\g$. 

In this paper we only consider the case $\g=\sl_n$. The vertex set of the Dynkin diagram of $\sl_n$ is indexed by $I = \{1, \dots, n-1\}$. However, it will be more convenient if the objects $\K(\uk)$ of $\K$ are indexed by $\uk = (k_1, \dots, k_n) \in \Z^n$ which we can identify with the weight lattice of $\gl_n$. In this notation the root lattice is generated by $\alpha_i = (0,\dots,-1,1,\dots,0)$ for $i \in I$ (this notation agrees with that in \cite{C1}). We equip $\Z^n$ with the standard non-degenerate bilinear form $\la \cdot,\cdot \ra: \Z^n \times \Z^n \rightarrow \Z$ (so that $\la \alpha_i, \alpha_j \ra$ is given by the standard Cartan datum for $\gl_n$).  

We require that the 2-category $\K$ is equipped with the following. 
\begin{itemize}
\item 1-morphisms: $\E_i \1_\uk = \1_{\uk+\alpha_i} \E_i$ and $\F_i \1_{\uk+\alpha_i} = \1_\uk \F_i$ where $\1_\uk$ is the identity 1-morphism of $\K(\uk)$.
\item 2-morphisms: for each $\uk \in \Z^n$, a $\k$-linear map $\spn \{\alpha_i: i \in I\} \rightarrow \End^2(\1_\uk)$.
\end{itemize}
We abuse notation and denote by $\theta \in \End^2(\1_\uk)$ the image of $\theta \in \spn \{\alpha_i: i \in I\}$ under the linear maps above. On this data we impose the following conditions.
\begin{enumerate}
\item \label{co:hom1} $\End^l(\1_\uk)$ is zero if $l < 0$ and one-dimensional if $l=0$ and $\1_\uk \ne 0$. Moreover, the space of maps between any two 1-morphisms is finite-dimensional.
\item \label{co:adj} $\E_i$ and $\F_i$ are left and right adjoints of each other up to specified shifts. More precisely:
\begin{enumerate}
\item $(\E_i \1_\uk)^R \cong \1_\uk \F_i \la \la \uk, \alpha_i \ra \ra + 1 \ra$
\item $(\E_i \1_\uk)^L \cong \1_\uk \F_i \la - \la \uk, \alpha_i \ra -1 \ra$.
\end{enumerate}

\item \label{co:EF} We have
$$
\begin{cases}
\E_i \F_i \1_\uk \cong \F_i \E_i \1_\uk \bigoplus_{[\la \uk, \alpha_i \ra]} \1_\uk &  \text{ if } \la \uk, \alpha_i \ra \ge 0 \\
\F_i \E_i \1_\uk \cong \E_i \F_i \1_\uk \bigoplus_{[-\la \uk, \alpha_i \ra]} \1_\uk &  \text{ if } \la \uk, \alpha_i \ra \le 0. \end{cases}
$$

\item \label{co:EiFj} If $i \ne j \in I$, then $\F_j \E_i \1_\uk \cong \E_i \F_j \1_\uk$.

\item \label{co:theta}For $i \in I$ we have
$$\E_i \E_i \cong \E_i^{(2)} \la -1 \ra \oplus \E_i^{(2)} \la 1 \ra$$
for some 1-morphism $\E_i^{(2)}$. Moreover, if $\theta \in \spn \{\alpha_i: i \in I\}$ then the map $I \t I \in \End^2(\E_i \1_\uk \E_i)$ induces a map between the summands $\E_i^{(2)} \la 1 \ra$ on either side which is
\begin{itemize}
\item nonzero if $\la \theta, \alpha_i \ra \ne 0$ and
\item zero if $\la \theta, \alpha_i \ra = 0$.
\end{itemize}

\item \label{co:vanish1} If $\alpha = \alpha_i$ or $\alpha = \alpha_i + \alpha_j$ for some $i,j \in I$ with $|i-j|=1$, then $\1_{\uk+r \alpha} = 0$ for $r \gg 0$ or $r \ll 0$.

\item \label{co:new} Suppose $i \ne j \in I$. If $\1_{\uk+\alpha_i}$ and $\1_{\uk+\alpha_j}$ are nonzero, then $\1_\uk$ and $\1_{\uk+\alpha_i+\alpha_j}$ are also nonzero.

\end{enumerate}

\begin{Remark} 
In the rest of the paper the object $\K(\uk)$ will be nonzero (i.e. $\1_\uk \ne 0$) if and only if all $k_i \ge 0$. Thus, conditions (\ref{co:vanish1}) and (\ref{co:new}) are trivial to check. 
\end{Remark}

In \cite[Theorem 1.1]{C2} we showed that such an $(\sl_n,\t)$ action must carry an action of the quiver Hecke algebras (KLR algebras). In particular, this gives us a decomposition 
$$\E_i^r \cong \bigoplus_{[r]!} \E_i^{(r)} \ \ \text{ and } \ \ \F_i^r \cong \bigoplus_{[r]!} \F_i^{(r)}$$
for certain 1-morphism $\E_i^{(r)}$ and $\F_i^{(r)}$ (called divided powers). These satisfy
\begin{align*}
(\E^{(r)}_i \1_\uk)^R &\cong \1_\l \F^{(r)}_i \la r(\la \uk, \alpha_i \ra + r) \ra \\
(\E^{(r)}_i \1_\uk)^L &\cong \1_\l \F^{(r)}_i \la -r(\la \uk, \alpha_i \ra + r) \ra.
\end{align*}

\subsection{(Categorical) braid group actions}\label{sec:braid}

The reason we are interested in $(\sl_n,\t)$ actions is that they can be used to define braid group actions \cite{CK2}, as we now recall. 

Suppose that, as above, we have an $(\sl_n,\t)$ action on a 2-category $\K$. Denote by $\Kom(\K)$ the bounded homotopy category of $\K$ (where objects are the same as in $\K$, 1-morphisms are complexes of 1-morphisms which are bounded from above and below and 2-morphisms are maps of complexes). We define $\T_i \1_\uk \in \Kom(\K)$ as 
\begin{align*}
\left[ \dots \rightarrow \E_i^{(-\la \uk, \alpha_i \ra+2)} \F_i^{(2)} \la -2 \ra \rightarrow \E_i^{(-\la \uk, \alpha_i \ra+1)} \F_i \la -1 \ra \rightarrow \E_i^{(-\la \uk, \alpha_i \ra)} \right] \1_\uk & \text{ if } \la \uk, \alpha_i \ra \le 0 \\
\left[\dots \rightarrow \F_i^{(\la \uk, \alpha_i \ra+2)} \E_i^{(2)} \la -2 \ra \rightarrow \F_i^{(\la \uk, \alpha_i \ra+1)} \E_i \la -1 \ra \rightarrow \F_i^{(\la \uk, \alpha_i \ra)} \right] \1_\uk & \text{ if } \la \uk, \alpha_i \ra \ge 0.
\end{align*}
One can show the differentials must be the unique nonzero maps. Notice that these complexes are actually bounded on the left since $\1_{\uk \pm r\alpha_i} = 0$ if $r \gg 0$. The main result of \cite{CK2} states that these complexes give us a braid group action. This fact categorifies a classical result of Lusztig \cite[5.2.1]{L}. 

\subsection{Categorified projectors}

To obtain projectors let us first consider 
$$\T_\omega \1_\uk := (\T_{n-1})(\T_{n-2}\T_{n-1}) \dots (\T_2 \dots \T_{n-1})(\T_1 \dots \T_{n-1}) \1_\uk$$
corresponding to a half twist in the braid group. In \cite[Section 5.3]{C1} we constructed a natural map $\1_\uk \rightarrow \T_\omega^2 \1_\uk$ and showed that there is a well-defined limit $\P^- \1_\uk := \lim_{\ell \rightarrow \infty} \T_\omega^{2 \ell} \1_\uk$ which lives in a certain subcategory $\Kom^-_*(\K) \subset \Kom^-(\K)$ of the bounded above homotopy category (see \cite[Section 3.5]{C1} for more details). 

\begin{Remark}
To illustrate, if $\K$ was the category of $\Z$-graded $\k$-vector spaces then $\oplus_{i \ge 0} \k [i]\la -i \ra$ would belong to $\Kom^-_*(\K)$ because $\sum_{i \ge 0} (-1)^i q^i [\k]$ converges to $\frac{1}{1+q} [\k]$ (here $[\k]$ is the class in K-theory of the one-dimensional vector space). On the other hand, $\oplus_{i \ge 0} \k [i]$ would not belong to $\Kom^-_*(\K)$ because $\sum_{i \ge 0} (-1)^i [\k]$ does not converge.
\end{Remark} 

Having shown that $\P^- \1_\uk$ is well-defined it is then easy to see that $\P^- \1_\uk$ is idempotent, meaning that $\P^- \P^- \1_\uk \cong \P^- \1_\uk$. The main result of \cite{C1} showed using an instance of skew Howe duality that $\P^-$ can be used to categorify all the clasps. The inspiration of using infinite twists to categorify clasps goes back to \cite{Roz} who categorified Jonez-Wenzl projectors within Bar-Natan's graphical formulation of Khovanov homology.

\section{The category $\K_n$}\label{sec:Kn}

\subsection{Categories and functors}

We now define a 2-category $\K_n$ with an $(\sl_n,\t)$ action. 

For $k \in \N$ consider the affine space $\A^k := \Spec \k[x_1, \dots, x_k]$ where $\deg(x_\ell)=2$ (the grading is equivalent to endowing $\A^k$ with a $\k^\times$ action). The quotient $\A_k := \A^k/S_k$ by the symmetric group $S_k$ in $k$ letters is isomorphic to $\Spec \k[e_1, \dots, e_k]$ where $e_\ell$ are the elementary symmetric functions and $\deg(e_\ell) = 2\ell$. For a sequence $\uk$ we write $\A_\uk := \A_{k_1} \times \dots \times \A_{k_n}$. Finally, we will denote by $D(\A_\uk)$ the derived category of $\k^\times$-equivariant quasi-coherent sheaves on $\A_\uk$. We will denote by $\{\cdot\}$ a shift in the grading induced by the $\k^\times$ action. In particular, this means that multiplication by $e_\ell$ induces a map $\O_{\A_{\uk}} \rightarrow \O_{\A_{\uk}} \{ 2 \ell \}$ since $e_\ell$ has degree $2 \ell$. This is the same convention as in earlier papers such as \cite{CK1}.

For $n \in \N$ the 2-category $\K_n$ is defined as follows. The objects are the categories $D(\A_\uk)$. The 1-morphisms are all kernels on products $\A_\uk \times \A_{\uk'}$ (with composition given by the convolution product $\star$) and the 2-morphisms are morphisms between kernels. The grading shift $\la 1 \ra$ is by definition $\{1\}$. 

Note that for $a,b \in \N$ there exists a natural projection map $\pi: \A_{a,b} \rightarrow \A_{a+b}$. This map is finite of degree $\binom{a+b}{a}$. More generally, we can consider correspondences such as
\begin{equation}\label{eq:corr}
\xymatrix{
& \A_{(\dots, k_i-r,r,k_{i+1},\dots)} \ar[ld]^{\pi_1} \ar[rd]_{\pi_2} & \\
\A_\uk & & \A_{\uk+r\alpha_i}
}
\end{equation}
where $\alpha_i = (0,\dots,-1,1,\dots,0)$ with a $-1$ in position $i$. We then define the following data. 
\begin{itemize}
\item 1-morphisms 
\begin{align*}
& \sE_i \1_\uk := \O_{\A_{(\dots,k_i-1,1,k_{i+1},\dots)}} \{k_i-1\} \in D(\A_\uk \times \A_{\uk + \alpha_i}) \\
& \1_\uk \sF_i := \O_{\A_{(\dots,k_i-1,1,k_{i+1},\dots)}} \{k_{i+1}\} \in D(\A_{\uk + \alpha_i} \times \A_\uk)
\end{align*}
where we embed $\A_{(\dots,k_i-1,1,k_{i+1},\dots)} \subset \A_\uk \times \A_{\uk+\alpha_i}$ using $\pi_1$ and $\pi_2$ from (\ref{eq:corr}) (taking $r=1$ in this case). 
\item A $\k$-linear map $\theta: \spn\{\alpha_i: i \in I\} \rightarrow \End^2(\1_\uk)$ where the image of $\alpha_i$ is given by mutiplication by $e_1^{(i)} - e_1^{(i+1)}$ where $e_1^{(i)}, e_2^{(i)}, \dots, e_{k_i}^{(i)}$ are the elementary generators of the factor $\A_{k_i}$ inside $\A_{\uk}$. 
\end{itemize}

\begin{Remark}
Although we use derived categories of quasi-coherent sheaves, we could restrict everything to abelian categories of coherent sheaves. This is because all the morphisms involved are flat and finite. However, it is natural to work with these larger categories because later we will apply Hochschild cohomology. 
\end{Remark}

\begin{Theorem}\label{thm:action}
The data above defines an $(\sl_n,\t)$ action on $\K_n$. 
\end{Theorem}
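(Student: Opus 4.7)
The plan is to verify each of the seven axioms (\ref{co:hom1})--(\ref{co:new}) directly. As the Remark after the axioms indicates, $\1_\uk$ is nonzero precisely when all $k_i \ge 0$, so axioms (\ref{co:vanish1}) and (\ref{co:new}) hold immediately. Axiom (\ref{co:hom1}) will follow from the fact that $\1_\uk = \O_{\Delta_{\A_\uk}}$ and hence $\End^*(\1_\uk)$ is the polynomial algebra in the generators $e_\ell^{(i)}$, which sits in non-negative even degree with one-dimensional degree-zero piece; finite-dimensionality of $\Hom$ spaces between the proposed kernels follows from the finiteness and flatness of the maps $\pi_1, \pi_2$ in (\ref{eq:corr}).

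Next I would handle the adjunctions of axiom (\ref{co:adj}). The correspondence $\A_{(\dots, k_i-1, 1, k_{i+1}, \dots)} \hookrightarrow \A_\uk \times \A_{\uk+\alpha_i}$ is a closed embedding whose two projections are finite and flat, so Grothendieck duality yields left and right adjoints supported on the same correspondence, twisted by the relative dualizing sheaf. A direct computation of this dualizing sheaf in terms of the symmetric function gradings (essentially the degrees of the Jacobians of the finite maps $\A_{a,b} \to \A_{a+b}$) pins down the shifts and matches them with $\la \la \uk, \alpha_i\ra +1\ra$ and $\la -\la \uk, \alpha_i\ra -1\ra$. Axiom (\ref{co:EiFj}) will follow by noting that when $|i-j|\ge 2$ the correspondences defining $\sE_i$ and $\sF_j$ act on disjoint factors of $\A_\uk$, so the relevant fiber products are simply direct products and the resulting kernels for $\sE_i \sF_j$ and $\sF_j \sE_i$ coincide as structure sheaves with matching shifts.

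The main computation is axiom (\ref{co:EF}). To compute $\sE_i \sF_i \1_\uk$ and $\sF_i \sE_i \1_\uk$ as convolutions in $D(\A_\uk \times \A_\uk)$, one forms the fiber products of the correspondences $\A_{(\dots, k_i-1, 1, k_{i+1}, \dots)}$ with themselves over the intermediate weight. Each such fiber product stratifies into a ``generic'' component (where the two added/removed points are distinct) and a ``diagonal'' component (where they coincide). The generic components on the two sides have matching pushforwards to $\A_\uk \times \A_\uk$; the difference between the two convolutions is accounted for by the diagonal contribution, which is a structure sheaf over $\A_\uk$ itself, producing the summand $\bigoplus_{[\la \uk, \alpha_i \ra]} \1_\uk$. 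The precise multiplicities and shifts arise from the $\k^\times$-equivariant cohomology of the $\bP^1$-fiber of the diagonal stratum, and the sign of $\la \uk, \alpha_i \ra$ determines which side absorbs these extra summands. This is the step I expect to be the main obstacle: matching the grading shifts on the nose and carefully identifying the exact triangle coming from the stratification so that it splits with the stated shifts.

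Finally, for axiom (\ref{co:theta}), I would define
\[
\sE_i^{(2)} \1_\uk := \O_{\A_{(\dots, k_i - 2, 2, k_{i+1}, \dots)}}
\]
with an appropriate shift. The convolution $\sE_i \sE_i \1_\uk$ corresponds to a fiber product which is a $\bP^1$-bundle over this correspondence (parametrizing the order of adding two boxes), which yields the claimed splitting $\sE_i \sE_i \cong \sE_i^{(2)}\la -1\ra \oplus \sE_i^{(2)}\la 1\ra$. Under this description, the 2-morphism $I\theta I$ is multiplication by $e_1^{(i)} - e_1^{(i+1)}$ on the middle factor, which under the $\bP^1$-bundle identification becomes multiplication by the tautological degree-one class; this class has a nonzero off-diagonal component between the two summands $\sE_i^{(2)} \la \pm 1\ra$ exactly when $\la \theta, \alpha_i \ra \ne 0$, yielding the required dichotomy. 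Most of this reduces to the $n=2$ case, which has been carried out in essentially equivalent form in \cite{CK1, CK2}, and the general case follows by restricting to the relevant pair of indices.
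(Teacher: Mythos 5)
The paper's own proof is deliberately brief: it observes that the bulk of the verification of a $(\sl_n,\t)$ action for Soergel-type bimodules is standard and ``essentially follows from \cite[Section 6]{KL}'', and then isolates the one genuinely new check, axiom (\ref{co:theta}), reducing it to the elementary statement that $\k[x,y] \cong \k[e_1,e_2] \oplus \k[e_1,e_2]\{2\}$ as $\k[e_1,e_2]$-bimodule with multiplication by $x$ or $y$ an isomorphism between the degree-2 summands. Your plan instead tries to verify all seven axioms directly, which is a legitimate alternative route, but two of your key geometric ingredients are wrong for this setting.

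First, you repeatedly invoke $\bP^1$-bundles and ``$\k^\times$-equivariant cohomology of the $\bP^1$-fiber'' to explain the splittings in axioms (\ref{co:EF}) and (\ref{co:theta}). But the 2-category $\K_n$ is built from \emph{affine} schemes $\A_\uk$ and finite flat maps $\pi: \A_{a,b} \to \A_{a+b}$; there are no $\bP^1$-bundles anywhere. The source of the splitting $\sE_i\sE_i \cong \sE_i^{(2)}\la-1\ra \oplus \sE_i^{(2)}\la 1\ra$ is the statement that $\k[x,y]$ is a free $\k[e_1,e_2]$-module of rank 2 with generators in degrees 0 and 2 --- precisely the bimodule fact the paper uses --- not the cohomology of a projective line. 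You appear to be reasoning by analogy with the Grassmannian/flag-variety incarnation of these categories (which is indeed equivalent via the theory of perverse sheaves, as the introduction notes), but if you are going to work in the coherent-sheaf model defined in Section \ref{sec:Kn} you cannot literally import $\bP^1$-fibers; the correct mechanism here is freeness of the finite flat covers.

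Second, your treatment of axiom (\ref{co:EiFj}) only addresses the case $|i-j|\ge 2$, where the correspondences act on disjoint factors of $\A_\uk$. But the axiom is stated for all $i\ne j$, and when $|i-j|=1$ (say $j=i+1$) the correspondences for $\sE_i$ and $\sF_{i+1}$ share the factor $\A_{k_{i+1}}$, so the fiber products are not simply products. The isomorphism $\F_{i+1}\E_i \1_\uk \cong \E_i\F_{i+1}\1_\uk$ still holds, but it requires an actual base-change computation on the shared factor. As written your argument does not establish it, and you would either need to carry out that computation or, as the paper does, cite the corresponding check in \cite{KL}.

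Finally, a small but nontrivial point that neither the paper's proof nor yours states fully but which the paper implicitly tracks: the grading shifts in the definitions of $\sE_i\1_\uk$ and $\1_\uk\sF_i$ are chosen to be compatible with the canonical bundle $\omega_{\A_\uk}\cong\O_{\A_\uk}\{d_\uk\}$ where $d_\uk = -\sum_\ell k_\ell(k_\ell+1)$, which is what makes the adjunction shifts in axiom (\ref{co:adj}) come out exactly as stated. Your Grothendieck-duality plan for (\ref{co:adj}) is the right idea, but if you carry it out you should verify the shifts against this formula rather than against the Jacobian of $\pi_1,\pi_2$ alone, since the dualizing complexes of the two sides of the correspondence both contribute.
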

\begin{proof}
The fact that relations of an $(\sl_n,\t)$ action are satisfied is not difficult to prove and essentially follows from \cite[Section 6]{KL}. The fact that $\t$ satisfies relation (\ref{co:theta}) comes down to the following elementary fact. Consider $\k[x,y]$ as a $\k[x,y]^{S_2} \cong \k[e_1,e_2]$ bimodule where $S_2$ acts by switching $x$ and $y$ while (following our notation above) $e_1=x+y,e_2=xy$. Then, as a bimodule, 
$$\k[x,y] \cong \k[e_1,e_2] \oplus \k[e_1,e_2] \{2\}$$
and multiplication by $x$ (or $y$) induces an endomorphism of $\k[x,y]$ which is an isomorphism between the summands $\k[e_1,e_2] \{2\}$ on either side. 

Perhaps one thing to note is that our choices of shifts in defining the $\sE_i$s and $\sF_i$s differ slightly from \cite{KL}. However, the specific choice of shifts is not so important and is mainly determined by the fact that the canonical bundle of $\A_\uk$ is $\omega_{\A_\uk} \cong \O_{\A_\uk} \{d_\uk\}$ where $d_\uk = - \sum_\ell k_\ell(k_\ell+1)$. 
\end{proof}

It is not hard to show that the divided powers $\E_i^{(r)} \1_\uk$ and $\1_\uk \F_i^{(r)}$ are given by kernels
\begin{align*}
& \sE_i^{(r)} \1_\uk := \O_{\A_{(\dots,k_i-r,r,k_{i+1},\dots)}} \{ r(k_i-r) \} \in D(\A_\uk \times \A_{\uk + r \alpha_i}) \\
& \1_\uk \sF_i^{(r)} := \O_{\A_{(\dots,k_i-r,r,k_{i+1},\dots)}} \{ rk_{i+1} \} \in D(\A_{\uk + r \alpha_i} \times \A_\uk)
\end{align*}
where again we embed $\A_{(\dots,k_i-r,r,k_{i+1},\dots)}$ using (\ref{eq:corr}) (we will not use this fact). 

\begin{Remark}
Note that there are three different gradings that show up. First there is $\la 1 \ra = \{1\}$ which corresponds to the grading induced by the $\k^\times$ action. Second there is the cohomological grading $[1]$ in $\Kom^-_*(\K_n)$. Third there is the cohomological grading $[[1]]$ which is internal to $D(\A_{\uk})$. This last grading only shows up when we apply the trace 2-functors described in Section \ref{sec:trace}. 
\end{Remark}

\subsection{The braid group action}\label{sec:braid2}

Following Section \ref{sec:braid} we define the braid group generators $\sT_i \1_\uk \in \Kom(\K_n)$ as 
\begin{align*}
\left[ \dots \rightarrow \sE_i^{(-\la \uk, \alpha_i \ra+2)} \star \sF_i^{(2)} \{ -2 \} \rightarrow \sE_i^{(-\la \uk, \alpha_i \ra+1)} \star \sF_i \{-1\} \rightarrow \sE_i^{(-\la \uk, \alpha_i \ra)} \right] \1_\uk & \text{ if } \la \uk, \alpha_i \ra \le 0 \\
\left[\dots \rightarrow \sF_i^{(\la \uk, \alpha_i \ra+2)} \star \sE_i^{(2)} \{-2\} \rightarrow \sF_i^{(\la \uk, \alpha_i \ra+1)} \star \sE_i \{-1\} \rightarrow \sF_i^{(\la \uk, \alpha_i \ra)} \right] \1_\uk & \text{ if } \la \uk, \alpha_i \ra \ge 0
\end{align*}
We also get the corresponding projectors $\sP^- \1_\uk \in \Kom^-_*(\K_n)$. 

Following the construction in \cite[Section 7.1]{C1} it is useful to also define the elements 
$$\sT'_i \1_\uk := \begin{cases} 
\sT_i \1_\uk [-k_{i+1}] \{k_{i+1}+k_ik_{i+1}\} & \text{ if } \la \uk, \alpha_i \ra \le 0 \\ 
\sT_i \1_\uk [-k_i] \{k_i+k_ik_{i+1}\} & \text{ if } \la \uk, \alpha_i \ra \ge 0. \end{cases}$$
Notice that in contrast to \cite[Section 7.1]{C1} we have an extra shift of $\{k_ik_{i+1}\}$. These $\sT'_i$ also generate a braid group action but are better behaved with respect to $\sE$'s and $\sF$'s since, using \cite[Corollary 7.3]{C1} and \cite[Corollary 4.6]{C1}, we have
\begin{align}
\label{rel:EFT} & \sT'_i \star \sT'_j \star \sE_i \cong \sE_j \star \sT'_i \star \sT'_j \ \ \text{ and } \ \ \sT'_i \star \sT'_j \star \sF_i \cong \sF_j \star \sT'_i \star \sT'_j \ \ \text{ if } \ \  |i-j|=1  \\
\label{rel:ET} & \sT'_i \star \sE_i \cong \sF_i \star \sT'_i  \ \ \text{ and } \ \ \sT'_i \star \sF_i \cong \sE_i \star \sT'_i.
\end{align}

\subsection{Trace 2-functors}\label{sec:trace}

For any $\ell \in \N$ we now define a 2-functor $\Psi_\ell: \K_n \rightarrow \K_{n+1}$. This functor should be thought of as adding a strand labeled $\ell$. 

At the level of objects $\Psi_\ell$ takes $D(\A_\uk)$ to $D(\A_{\uk,\ell})$. Given a 1-morphism $\sM \in D(\A_\uk \times \A_{\uk'})$ we define 
\begin{equation}\label{eq:psi}
\Psi_\ell(\sM) := \Delta_* \pi^* (\sM) \in D(\A_{\uk,\ell} \times \A_{\uk',\ell})
\end{equation}
where $\pi^*$ and $\Delta_*$ are pullback and pushforward with respect to the natural projection and diagonal maps 
$$\pi: \A_\uk \times \A_\ell \times \A_{\uk'} \rightarrow \A_\uk \times \A_{\uk'} \text{ and } \Delta: \A_\uk \times \A_\ell \times \A_{\uk'}  \rightarrow (\A_\uk \times \A_\ell) \times (\A_{\uk'} \times \A_\ell).$$
Given a 2-morphism $f: \sM \rightarrow \sM'$ we define $\Psi_\ell(f) := \Delta_* \pi^*(f)$. Using Corollary \ref{cor:psi} (see the Appendix) this defines a 2-functor $\Psi_\ell: \K_n \rightarrow \K_{n+1}$. It is not difficult to see that $\Psi_\ell(\sE_i \1_\uk) \cong \sE_i \1_{\uk,\ell}$ and $\Psi_\ell(\1_\uk \sF_i) \cong \1_{\uk,\ell} \sF_i$. 

We can likewise define a 2-functor $\Psi'_\ell: \K_{n+1} \rightarrow \K_n$. On objects it takes $D(\A_{\uk,\ell})$ to $D(\A_\uk)$. All other objects, meaning $D(\A_{\uk,\ell'})$ where $\ell \ne \ell'$, are mapped to zero. On 1-morphisms it acts by 
$$D(\A_{\uk,\ell} \times \A_{\uk',\ell}) \ni \sN \mapsto \Psi'_\ell(\sN) := \pi_* \Delta^*(\sN) \in D(\A_\uk \times \A_{\uk'}).$$
By Propositions \ref{prop:trace1} we also have 
\begin{equation}
\label{eq:trace1} \Psi'_\ell(\sN \star \Psi_\ell(\sM)) \cong \Psi'_\ell(\sN) \star \sM 
\end{equation}
where $\sM \in D(\A_\uk \times \A_{\uk'})$ and $\sN \in D(\A_{\uk,\ell} \times \A_{\uk',\ell})$. 

If we denote by $\bullet = \Spec \k$ then $D(\bullet)$ is the category of complexes of (possibly infinite-dimensional) graded vector spaces. For any $\uk$ we define 
$$\tau: D(\A_\uk \times \A_\uk) \rightarrow D(\bullet) \ \ \text{ where } \ \ \sM \mapsto \Psi'_{k_1} \circ \dots \circ \Psi'_{k_n}(\sM).$$
Note that this is just the Hochschild homology $HH_*(\sM)$ of $\sM$. 

\section{Link invariants}\label{sec:linkinvariants}

Consider an oriented link $L$ whose components are coloured by partitions. For now we assume that each such partition has only one part, meaning it is of the form $(k)$ for some $k \in \N$. Such a link can be given as the closure $\hbeta$ of a coloured braid $\beta$, where we visualize the strands of this braid vertically with the top and bottom labeled by the same sequence $\uk$. 

To a positive crossing exchanging strands $i$ and $i+1$ (i.e. the strand starting at $i$ crosses over the one starting at $i+1$) we associate the 1-morphism
$$\sT'_i \in \Kom^-_*(D(\A_\uk \times \A_{s_i \cdot \uk}))$$
as defined in Section \ref{sec:braid2}, where $s_i$ acts on $\uk$ by switching $k_i$ and $k_{i+1}$. Composing these 1-morphisms gives a complex $\sT'_\beta \in \Kom^-_*(D(\A_\uk \times \A_{\uk'}))$. The invariant associated to the closure $\hbeta$ of the braid is then $\tau(\sT'_\beta) \in \Kom^-_*(D(\bullet))$. 

To deal with partitions with more than one parts we cable strands together and use the projector $\sP^-$. More precisely, given a strand labeled by a partition $k_i^{(\cdot)} = (k_i^{(1)} \le \dots \le k_i^{(p)})$, we replace it with $p$ strands labeled $k_i^{(1)}, \dots, k_i^{(p)}$ together with the projector $\sP^- \1_{k_i^{(\cdot)}}$ on these strands. 

\begin{Theorem}\label{thm:1}
Suppose $L = \widehat{\beta}$ where $\beta$ is a braid whose strands are coloured by partitions. Then, up to an overall grading shift, $\sH(L) := \tau(\sT'_\beta) \in \Kom^-_*(D(\bullet))$ defines a triply graded link invariant.
\end{Theorem}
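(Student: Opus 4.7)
The plan is to verify invariance of $\tau(\sT'_\beta)$ under coloured Markov moves, and then to extend this to arbitrary partitions using the absorption properties of $\sP^-$. The three gradings of $\sH(L)$ are the quantum grading $\{1\}$, the cohomological grading $[1]$ of $\Kom^-_*$, and the internal grading $[[1]]$ inside $D(\A_\uk)$ picked up by $\tau = HH_*$; the content is that the isomorphism class of the complex is an invariant up to an overall shift that depends only on the writhe and colour data.

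\textbf{Braid and conjugation invariance.} The $\sT_i$ (and hence the $\sT'_i$) satisfy the braid group relations by \cite{CK2}, so $\sT'_\beta$ depends only on $\beta$ as an element of the coloured braid group. For Markov I (conjugation invariance $\tau(\sT'_{\alpha\beta\alpha^{-1}}) \cong \tau(\sT'_\beta)$), the point is cyclicity of the trace. This follows from iterating the identity \eqref{eq:trace1}, $\Psi'_\ell(\sN \star \Psi_\ell(\sM)) \cong \Psi'_\ell(\sN) \star \sM$, together with the fact that $\Psi_\ell$ intertwines the $\sE_i,\sF_i$ and hence also the $\sT'_i$ up to the correct shifts; applied one colour at a time this yields $\tau(\sA \star \sB) \cong \tau(\sB \star \sA)$ whenever the colours match.

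\textbf{Markov II / stabilization.} The main work is to show that appending a stabilizing crossing at the end of the braid changes $\tau(\sT'_\beta)$ only by a global grading shift. Using the definition of $\sT'_i$ together with the intertwining relations \eqref{rel:ET}, stabilization reduces to computing $\tau$ of the complex $\sT'_{\pm}$ on a single pair of strands $(\uk,\ell)$. Expanding $\sT'_{\pm}$ explicitly and applying $\Psi'_\ell$ termwise, all terms except the extremal one become zero (or cancel) because $\Psi'_\ell(\sE_i^{(r)}\star\sF_i^{(s)})$ vanishes unless $r=s=0$ on the $\ell$-strand; what survives is precisely an identity kernel with the shift engineered into the definition of $\sT'_i$ (this is the purpose of the extra $\{k_ik_{i+1}\}$ correction), giving the claimed invariance up to an overall shift determined by writhe. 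The main obstacle lies here: one must correctly bookkeep all three gradings in the stabilization computation and check that the surviving term really has the shift prescribed by the writhe contribution, so that different braid presentations of the same link produce complexes differing by exactly the same global shift.

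\textbf{Cabling with $\sP^-$ for general partitions.} Given the single-part case, a partition strand $(k_i^{(1)} \le \dots \le k_i^{(p)})$ is replaced by $p$ parallel strands with $\sP^- \1_{k_i^{(\cdot)}}$ inserted. Independence from the choices (where along the cable to place $\sP^-$, how to order parts, how to slide $\sP^-$ past crossings between different cables) follows from three properties established in \cite{C1}: idempotence $\sP^-\star \sP^-\cong \sP^-$, absorption of half-twists on the cabled strands up to shift (this is exactly the defining property of $\sP^-$ as $\lim \T_\omega^{2\ell}$), and cyclic compatibility with $\tau$ from Step 1. Combining these reduces any two choices to the same complex up to a shift, completing invariance under the coloured Markov moves and hence the theorem.
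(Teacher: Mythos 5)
Your outline correctly identifies the three things to check (braid relations, conjugation, stabilization), and the cabling discussion and the use of \eqref{eq:trace1} to localize the stabilization computation to one strand are both in the spirit of the paper. However, the mechanism you propose for Markov II is wrong, and this is exactly where the real work of the proof lies.

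You assert that after applying $\Psi'_\ell$ termwise to the expansion of $\sT'_n$, all non-extremal terms vanish because $\Psi'_\ell(\sE_i^{(r)} \star \sF_i^{(s)})$ is zero unless $r=s=0$ on the $\ell$-strand. This is false. Already for $\ell=1$ and $r=s=1$, the term $\sE\sF$ is $\O_S$ on $\A_{1,1} \times \A_{1,1}$, and the computation in Lemma \ref{lem:technical} shows $\Psi'_1(\O_S) \cong (\O_\Delta \oplus \O_\Delta[[1]]\{-4\})\otimes_\k \k[y]$, which is manifestly nonzero. What actually happens is that this term \emph{cancels} against part of $\Psi'_1(\O_\Delta)$ in the two-term complex $[\Psi'_1(\O_S) \to \Psi'_1(\O_\Delta)]$ via Gaussian elimination, leaving $\O_\Delta[[1]]\{-2\}$; one must check that the differential induces the right isomorphisms, which is the content of \eqref{eq:B} and \eqref{eq:C}. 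So the non-extremal terms contribute and then cancel, they do not vanish, and the verification of the cancellation is nontrivial.

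Moreover, the paper does not attempt a direct computation of $\Psi'_\ell(\sT'_n)$ for general $\ell$: it proves the stabilization isomorphism $\tau(\sT'_n \star \Psi_\ell(\sP)) \cong \tau(\sP)[-\ell][[\ell]]$ by induction on $\ell$, using a fairly elaborate algebraic manipulation (adding two auxiliary $0$-coloured strands and using \eqref{rel:EFT}, \eqref{rel:ET} and the already-proved cases $\Psi_1$ and $\Psi_{\ell-1}$) to reduce the $\ell$-strand case to $\ell=1$. The base case $\ell=1$ then rests entirely on Lemma \ref{lem:technical}. Your proposal has neither this inductive reduction nor the explicit $\ell=1$ computation; without one or the other, the stabilization invariance, and hence the theorem, is not established.

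One smaller remark: for conjugation invariance the paper simply invokes the general trace property $\tau(\sA\star\sB)\cong\tau(\sB\star\sA)$ of Hochschild homology for arbitrary kernels, which is cleaner than trying to derive it from \eqref{eq:trace1} and intertwining of $\sE_i,\sF_i$ as you suggest.
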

\begin{Remark}
In order to obtain a homology which is invariant on the nose (not just up to shifts) one needs to shift the functor $\Psi'_\ell$ by $[\frac{\ell}{2}][[-\frac{\ell}{2}]]$ and the definition of a $\sT$ switching two strands labeled $\ell$ by $[\frac{\ell}{2}] [[-\frac{\ell}{2}]]$. 
\end{Remark}

Before we can prove Theorem \ref{thm:1} we need the following Lemma. 

\begin{Lemma}\label{lem:technical}
For $\sT_1 \in \Kom^*_-(D(\A_{1,1} \times \A_{1,1}))$ we have 
$$\Psi_1'(\sT_1) \cong \O_\Delta [[1]]\{-2\} \ \ \text{ and } \ \ \Psi_1'(\sT_1^{-1}) \cong \O_\Delta \{2\}[-1]$$ 
inside $\Kom^*_-(D(\A_1 \times \A_1))$.
\end{Lemma}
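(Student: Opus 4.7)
The plan is to identify $\sT_1 \1_{(1,1)}$ with a shifted structure sheaf of the swap-diagonal in $\A_{(1,1)} \times \A_{(1,1)}$, and then apply $\Psi'_1 = \pi_* \Delta^*$ directly to that simpler object.

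Since $\la (1,1), \alpha_1 \ra = 0$ and $\sE_1^{(s)} \1_{(1,1)} = 0$ for $s \ge 2$ (because $\1_{(1-s, 1+s)} = 0$), the complex $\sT_1 \1_{(1,1)}$ collapses to the two-term complex $[\sF_1 \star \sE_1 \{-1\} \xrightarrow{\varepsilon} \1_{(1,1)}]$ in cohomological degrees $-1, 0$, with $\varepsilon$ the adjunction counit. Using \eqref{eq:corr}, convolution gives $\sF_1 \star \sE_1 \1_{(1,1)} \cong \O_V \{1\}$ where $V \subset \A_{(1,1)} \times \A_{(1,1)}$ is the subscheme cut out by $a_1 + b_1 - a_2 - b_2$ and $a_1 b_1 - a_2 b_2$; set-theoretically $V$ is the union of the ordinary diagonal and the swap-diagonal $\Delta' = \{(a, b, b, a)\}$. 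The counit $\varepsilon$ corresponds to the restriction of $\O_V$ to the diagonal component and is surjective. A direct calculation---using the presentation $\O_V \cong \k[a_1, b_1, a_2]/((a_1-a_2)(b_1-a_2))$ obtained by eliminating $b_2 = a_1 + b_1 - a_2$, together with $\mathrm{Ann}_{\O_V}(a_1 - a_2) = (b_1 - a_2)\O_V$---yields $\ker(\varepsilon) = (a_1 - a_2)\O_V \cong \O_{\Delta'}\{-2\}$, the $\{-2\}$ tracking the degree-$2$ generator $a_1 - a_2$. Surjectivity then gives $\sT_1 \1_{(1,1)} \simeq \O_{\Delta'}[[1]]\{-2\}$.

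To apply $\Psi'_1 = \pi_* \Delta^*$ to this: the ideal of $\Delta'$ in $\A_{(1,1)}^2$ is generated by the regular sequence $(a_2 - b_1, b_2 - a_1)$, which pulls back along $\Delta : (x, y, z) \mapsto (x, y, z, y)$ to $(z - y, y - x)$, still a regular sequence in $\k[x, y, z]$. Hence $L\Delta^* \O_{\Delta'} = \O_L$ where $L = \{x = y = z\} \cong \A^1$. Since $\pi|_L$ is the diagonal embedding $\A^1 \to \A_1 \times \A_1$, $t \mapsto (t, t)$, we have $\pi_* \O_L = \O_\Delta$ (the diagonal in $\A_1 \times \A_1$). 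Combining with the $[[1]]\{-2\}$ shift from the previous paragraph gives $\Psi'_1(\sT_1) \cong \O_\Delta [[1]] \{-2\}$ as claimed.

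For $\sT_1^{-1}$: since $\Delta'$ is the graph of the involution swapping the two $\A_1$ factors of $\A_{(1,1)}$, convolution gives $\O_{\Delta'} \star \O_{\Delta'} \cong \1_{(1,1)}$, so $\O_{\Delta'}$ is self-inverse as a kernel. Inverting $\sT_1 \simeq \O_{\Delta'}[[1]]\{-2\}$ yields $\sT_1^{-1} \simeq \O_{\Delta'}\{2\}[-1]$, and the same base-change computation gives $\Psi'_1(\sT_1^{-1}) \cong \O_\Delta \{2\}[-1]$. The main obstacle I anticipate is the careful bookkeeping in identifying $\ker(\varepsilon) \cong \O_{\Delta'}\{-2\}$ with the correct grading and cohomological shifts; once this is secured, the rest of the proof reduces to standard Koszul resolutions and pushforward along affine maps.
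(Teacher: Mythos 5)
Your base-change computation is correct and useful: the Koszul resolution of $\O_{\Delta'}$, the regularity of the pulled-back sequence $(z-y, y-x)$, and the identifications $L\Delta^*\O_{\Delta'} \cong \O_L$, $\pi_*\O_L \cong \O_\Delta$ all match the paper's calculation of $\Psi'_1(\O_T)$. The gap is the step before this: the claim that surjectivity of $\varepsilon: \O_S \to \O_\Delta$ (your $\O_V \to \O_\Delta$) yields $\sT_1 \simeq \O_{\Delta'}[[1]]\{-2\}$. That isomorphism holds in $D(\A_{1,1}\times\A_{1,1})$ after totalization (it is the cone of $\varepsilon$ in the distinguished triangle $\O_T\{-2\} \to \O_S \to \O_\Delta$), but \emph{not} in $\Kom^-_*(D(\A_{1,1}\times\A_{1,1}))$, which is where $\sT_1$ and the statement of the Lemma live. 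Concretely, a chain map $\O_{\Delta'}[[1]]\{-2\} \to [\O_S \to \O_\Delta]$ is determined by its degree-$0$ component, an element of $\Hom_{D(\A_{1,1}\times\A_{1,1})}(\O_{\Delta'}\{-2\}[[1]], \O_\Delta) = \Ext^{-1}(\O_{\Delta'}\{-2\}, \O_\Delta) = 0$; so every such chain map is zero and no homotopy inverse can exist. Equivalently, the surjection $\varepsilon$ of Soergel-bimodule type is not split, so there is no contractible summand to cancel by Gaussian elimination \emph{before} applying $\Psi'_1$.

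The paper avoids this by applying $\Psi'_1$ termwise to the two-term complex $[\O_S \to \O_\Delta]$ and only then simplifying: one computes $\Psi'_1(\O_\Delta) \cong (\O_\Delta \oplus \O_\Delta[[1]]\{-2\})\otimes_\k\k[y]$, determines the cohomology sheaves of $\Psi'_1(\O_S)$ via the long exact sequence of $\O_T\{-2\}\to\O_S\to\O_\Delta$, and performs Gaussian elimination inside $\Kom(D(\A_1\times\A_1))$. The point is that \emph{after} applying $\Psi'_1$ the relevant differential becomes a split injection --- the external $\otimes_\k\k[y]$ factor shifts degrees so that the $[[1]]$-part of the map is $\k[y]\{-4\}\xrightarrow{\cdot y}\k[y]\{-2\}$, which splits off $\k\{-2\}$ as a direct summand --- whereas before applying $\Psi'_1$ no such splitting exists. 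The $\Kom$-level statement is genuinely needed downstream: this Lemma is convolved against arbitrary (including unbounded projector) complexes $\sP$ inside $\Kom^-_*$, where one cannot simply pass to totalizations. The same issue affects your treatment of $\sT_1^{-1}$; so while your final answer agrees with the Lemma, the route does not establish it and needs to be replaced by (or supplemented with) the termwise computation and elimination argument.
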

\begin{Remark}\label{rem:1}
The key to Lemma \ref{lem:technical} is the exact triangle $\O_S \rightarrow \O_\Delta \rightarrow \O_T[[1]]\{-2\}$ where $S$ and $T$ are the loci inside $\A_{\uk,1,1} \times \A_{\uk,1,1}$ given by (\ref{eq:4}) on the last two strands. The argument in the proof shows in fact that for $\sP \in \Kom(D(\A_{\uk,1} \times \A_{\uk,1}))$ we have an isomorphism 
\begin{equation}\label{eq:6}
\Psi_1'((\O_S \rightarrow \O_\Delta) \star \Psi_1(\sP)) \xrightarrow{\sim} \Psi'_1(\O_T [[1]]\{-2\} \star \Psi_1(\sP))
\end{equation}
inside $\Kom(D(\A_{\uk,1} \times \A_{\uk,1}))$. 
\end{Remark}
\begin{proof}
On $\A_{1,1} \times \A_{1,1}$ consider the following subvarieties:
\begin{equation}\label{eq:4}
\Delta := \{(x,y,x,y)\}, \ \ T := \{(x,y,y,x)\} \ \ \text{ and } \ \ S := T \cup \Delta.
\end{equation}
Then $\sT_1 \cong [\O_S \rightarrow \O_\Delta]$ and $\sT_1^{-1} \cong [\O_\Delta \rightarrow \O_S \{2\}]$ where in both cases $\O_\Delta$ is in cohomological degree zero. The result will follow if we can show that 
\begin{align*}
[\Psi'_1(\O_S) \rightarrow \Psi'_1(\O_\Delta)] &\cong [0 \rightarrow \O_\Delta [[1]]\{-2\}] \\
[\Psi'_1(\O_\Delta) \rightarrow \Psi'_1(\O_S\{2\} )] &\cong [0 \rightarrow \O_\Delta\{2\}]
\end{align*}
in the homotopy category $\Kom^-_*(D(\A_1 \times \A_1))$. 

We will prove the first assertion (the second follows similarly). Note that $S \cap T \subset T$ is the divisor cut out by $x=y$. Thus, $\O_T(- S \cap T) \cong \O_T \{-2\}$ and we have the exact triangle $\O_T \{-2\} \rightarrow \O_S \rightarrow \O_\Delta$. Recall that $\Psi'_1(\cdot) = \pi_* \Delta^* (\cdot)$  where $\pi$ and $\Delta$ are the natural maps
$$\A_1 \times \A_1 \xleftarrow{\pi} \A_1 \times \A_1 \times \A_1 \xrightarrow{\Delta} \A_{1,1} \times \A_{1,1}.$$
Now, $\O_\Delta \in D(\A_{1,1} \times \A_{1,1})$ has a resolution 
$$\O_{\A_{1,1} \times \A_{1,1}} \{-2\} \xrightarrow{\cdot (y_1-y_2)} \O_{\A_{1,1} \times \A_{1,1}} \rightarrow \O_\Delta$$
which means that $\Delta^* \O_\Delta \cong \O_{\Delta'} \oplus \O_{\Delta'}[[1]]\{-2\}$ where $\Delta' \subset \A_1 \times \A_1 \times \A_1$ is the locus $(x,y,x)$. Moreover, $\pi_*(\O_{\Delta'}) \cong \O_\Delta \otimes_\k \k[y] \in D(\A_1 \times \A_1)$. Hence
\begin{equation}\label{eq:A}
\Psi'_1(\O_\Delta) = \pi_* \Delta^* \O_\Delta \cong \pi_*(\O_{\Delta'} \oplus \O_{\Delta'} [[1]]\{-2\}) \cong (\O_\Delta \oplus \O_\Delta [[1]]\{-2\}) \otimes_\k \k[y].
\end{equation}
On the other hand, $\Delta^* \O_T \cong \O_{\{(x,x,x)\}}$ which means that
$$\Psi'_1(\O_T \{-2\}) = \pi_* \Delta^* \O_T \{-2\} \cong \pi_* \O_{\{(x,x,x)\}} \{-2\} \cong \O_\Delta \{-2\}.$$
Thus, the exact triangle $\Psi'_1(\O_S) \rightarrow \Psi'_1(\O_\Delta) \rightarrow \Psi'_1(\O_T \{-2\} [1])$ becomes
\begin{equation*}
\Psi'_1(\O_S) \xrightarrow{f} \O_\Delta \otimes_\k \k[y] \oplus \O_\Delta [[1]]\{-2\} \otimes_\k \k[y] \rightarrow \O_\Delta [[1]] \{-2\}. 
\end{equation*}
Now $\sH^0(\Psi'_1(\O_S)) \cong \pi_* L^0 \Delta^* \O_S \cong \pi_* \O_{\Delta'} \cong \O_\Delta \otimes_\k \k[y]$ and $f$ induces an isomorphism in this degree. Thus, from the long exact sequence we get 
\begin{equation}\label{eq:B}
\sH^*(\Psi'_1(\O_S)) = 
\begin{cases}
\O_\Delta \otimes_\k \k[y] \ \ &\text{ if } *=0, \\
\O_\Delta \{-4\} \otimes_\k \k[y] \ \ &\text{ if } *=-1, \\
0 \ \ &\text{ otherwise. }
\end{cases}
\end{equation}
It is easy to see that on $\A_1 \times \A_1$ we have $\End^2(\O_\Delta \{\cdot\} \otimes_\k \k[y]) = 0$ so 
$$\Psi'_1(\O_S) \cong (\O_\Delta \oplus \O_\Delta [[1]]\{-4\}) \otimes_\k \k[y].$$
Hence, using a version of the Gaussian Elimination Lemma \cite[Lemma 3.2]{C1}, we combine (\ref{eq:A}) and (\ref{eq:B}) to obtain
\begin{align}\label{eq:C}
[\Psi'_1(\O_S) \rightarrow \Psi'_1(\O_\Delta)] \cong [0 \rightarrow \O_\Delta [[1]]\{-2\}].
\end{align}
\end{proof}

\begin{proof}[Proof of Theorem \ref{thm:1}]
We already know that $\beta \mapsto \sT'_\beta$ satisfies the braid relations. It remains to check that $\tau(\sT'_{\beta_1} \star \sT'_{\beta_2}) \cong \tau(\sT'_{\beta_2} \star \sT'_{\beta_1})$ and the Markov move (stabilization). 

The first relation is a standard property of Hochschild homology (in fact the more general trace property $\tau(\sA \star \sB) \cong \tau(\sB \star \sA)$ holds for any kernels $\sA, \sB$).

To prove the Markov move first note that since projectors $\sP^-$ move freely through crossings it suffices to prove the Markov move when the extra strand is coloured by a partition $(\ell)$. In this case, for any $\sP \in \Kom^-_*(D(\A_{\uk} \times \A_{\uk})$ we claim that 
\begin{equation}\label{eq:tr1}
\tau(\sT'_n \star \Psi_\ell(\sP)) \cong \tau(\sP)[-\ell] [[\ell]] \ \ \text{ and } \ \ \tau((\sT'_n)^{-1} \star \Psi_\ell(\sP)) \cong \tau(\sP).  
\end{equation}
We prove the isomorphism on the left by induction on $\ell$ (the right one is similar). For $\sP \in \Kom^-_*(D(\A_{\uk} \times \A_{\uk}))$ we have the following algebraic computation
\begin{align}
\bigoplus_{[\ell]} \tau(\sT'_n \star \Psi_\ell(\sP)) 
&\cong \bigoplus_{[\ell]} \tau(\sT'_{n+1} \star \sT'_n \star \sT'_{n+2} \star \sT'_{n+1} \star (\Psi_0 \circ \Psi_\ell \circ \Psi_0)(\sP)) \\
\label{line:1} &\cong \tau(\sT'_{n+1} \star \sT'_n \star \sT'_{n+2} \star \sT'_{n+1} \star \sF_{n+2} \star \sE_{n+2} \star (\Psi_0 \circ \Psi_\ell \circ \Psi_0)(\sP)) \\
\label{line:2} &\cong \tau(\sF_n \star \sT'_{n+1} \star \sT'_n \star \sT'_{n+2} \star \sT'_{n+1} \star \sE_{n+2} \star (\Psi_0 \circ \Psi_\ell \circ \Psi_0)(\sP)) \\
\label{line:3} &\cong \tau(\sE_{n+2} \star \sF_n \star \sT'_{n+1} \star \sT'_n \star \sT'_{n+2} \star \sT'_{n+1} \star (\Psi_1 \circ \Psi_{\ell-1} \circ \Psi_0)(\sP)) \\
\label{line:4} &\cong \tau(\sF_n \star \sT'_{n+1} \star \sT'_n \star \sT'_{n+2} \star \sT'_{n+1} \star \sE_n \star (\Psi_1 \circ \Psi_{\ell-1} \circ \Psi_0)(\sP)) \\
\label{line:5} &\cong \tau(\sF_n \star \sT'_{n+1} \star \sT'_n \star \sT'_{n+1} \star \sE_n \star (\Psi_{\ell-1} \circ \Psi_0)(\sP)) [-1] [[1]] \\
\label{line:6} &\cong \tau(\sF_n \star \sT'_n \star \sT'_{n+1} \star \sT'_n \star \sE_n \star (\Psi_{\ell-1} \circ \Psi_0)(\sP)) [-1] [[1]] \\
\label{line:7} &\cong \tau(\sF_n \star \sT'_n \star \sT'_n \star \sE_n \star \Psi_0(\sP)) [-\ell] [[\ell]] \\
\label{line:8} &\cong \tau(\sT'_n \star \sE_n \star \sF_n \star \sT'_n \star \Psi_0(\sP)) [-\ell] [[\ell]] \\
\label{line:9} &\cong \bigoplus_{[\ell]} \tau(\sP) [-\ell] [[\ell]].
\end{align}
Here we added two strands labeled $0$ to obtain the first isomorphism, used (\ref{rel:EFT}) twice to obtain (\ref{line:2}), used the Markov relation to obtain (\ref{line:3}), used that 
\begin{align*}
\sE_{n+2} \star \sF_n \star \sT'_{n+1} \star \sT'_n \star \sT'_{n+2} \star \sT'_{n+1} 
&\cong \sF_n \star \sE_{n+2} \star \sT'_{n+1} \star \sT'_{n+2} \star \sT'_n \star \sT'_{n+1} \\
&\cong \sF_n \star \sT'_{n+1} \star \sT'_{n+2} \star \sT'_n \star \sT'_{n+1} \star \sE_n
\end{align*}
to get (\ref{line:4}), used (\ref{eq:tr1}) with $\Psi_1$ to obtain (\ref{line:5}) and with $\Psi_{\ell-1}$ to obtain (\ref{line:7}), applied (\ref{rel:ET}) twice to obtain (\ref{line:9}) and used that $(\sT'_i)^2$ is the identity if one of the strands it acts on is labeled $0$ to get (\ref{line:9}).

Thus, (\ref{eq:tr1}) follows by induction if we can prove the base case $\ell=1$. In this case we have
$$\tau(\sT'_n \star \Psi_1(\sP)) \cong \tau(\Psi'_1(\sT'_n \star \Psi_1(\sP))) \cong \tau(\Psi'_1(\sT'_n) \star \sP)$$
so it suffices to show that $\Psi'_1(\sT'_n) \cong \O_\Delta [-1][[1]]$. This follows from Lemma \ref{lem:technical} (since $\sT_n' = \sT_n [-1]\{2\}$ in this case). 

\begin{Remark}
For those familiar with webs, see for instance \cite{CKM}, the algebraic computation above can be summarized as follows. First break up the strand labeled $\ell$ and then use that ``trivalent vertices'' move naturally through crossings together with the Markov move. Figure \ref{fig:2} illustrates this procedure where the box denotes an arbitrary braid (we simplify by omitting the closure of each diagram). 

\begin{figure}[H]
\begin{center}
\puteps[0.7]{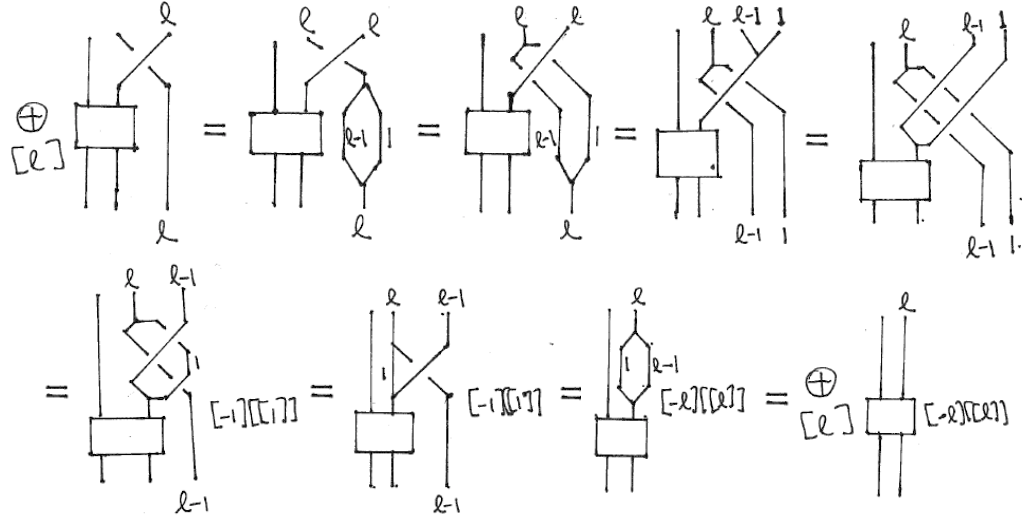}
\end{center}
\caption{The Markov move involving a strand labeled $\ell$. }\label{fig:2}
\end{figure}
\end{Remark}
\end{proof}

\section{K-theory}

Recall that to a link $L$ whose strands are coloured by partitions one can associate the coloured HOMFLY polynomial $P_L(q,a) \in \k(q,a)$. We now explain why the invariant from Theorem \ref{thm:1} categorifies the coloured HOMFLY polynomial. This is normalized so that if $L = \bigcirc_{(k)}$ (the unknot labeled by $k$) then 
\begin{equation}\label{eq:D}
P_L(q,a) = \prod_{\ell = 1}^k \frac{aq^{-\ell+1} - a^{-1}q^{\ell-1}}{q^{-\ell} - q^{\ell}}.
\end{equation}
\begin{Remark}
Note that for notational convenience we use the transpose notation, meaning that what we call $\l$ would normally be the transpose partition. For example, our partition $(k)$ would be instead $(1^k)$ (and vice versa). 
\end{Remark}
On the other hand, we can consider the Poincar\'e polynomial $\sP_L(q,a,t)$ of $\sH(L)$ from Theorem \ref{thm:1}. Here the shifts $\{1\},[[1]]$ and $[1]$ are kept track of by formal variables $q,-a^2,t$ respectively. 

\begin{Proposition} 
For a coloured link $L$, the invariants $P_L(q,a)$ and $\sP_L(q,aq,-1)$ agree, up to an explicit factor $a^mq^n$.
\end{Proposition}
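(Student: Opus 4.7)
My plan is to decategorify. The specialization $t = -1$ takes the Euler characteristic along the $\Kom^-_*$ direction; this is well-defined precisely because $\sH(L) \in \Kom^-_*(D(\bullet))$ means the alternating sum of graded multiplicities converges in $\Z[[q,q^{-1},a,a^{-1}]]$. Thus $\sP_L(q,aq,-1)$ is a genuine graded Euler characteristic of $\sH(L)$, and I will compare it with $P_L(q,a)$ by checking on enough basic pieces: the unknot, crossings, and cabling with projectors.

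First I will verify the unknot. For $L = \bigcirc_{(k)}$ the invariant $\sH(L) = \tau(\O_{\Delta_{\A_k}})$ is the Hochschild homology of the polynomial ring $\k[e_1,\dots,e_k]$ with $\deg(e_\ell) = 2\ell$. Hochschild--Kostant--Rosenberg identifies this with $\k[e_1,\dots,e_k] \otimes \Lambda^*(\theta_1,\dots,\theta_k)$, where the odd generator $\theta_\ell$ carries internal $q$-degree determined by $e_\ell$ and homological degree $[[1]]$. A direct computation of the resulting Poincar\'e series, followed by the substitution $a \mapsto aq$, recovers (\ref{eq:D}) up to an overall monomial $a^m q^n$.

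Next I will decategorify the full invariant. Each $\sT'_i$ is a finite complex of divided-power kernels $\sE_i^{(r)} \star \sF_i^{(s)}$, so its class at $t = -1$ in the relevant Grothendieck group is an explicit Laurent polynomial. Composing with $\tau$ and using the Hochschild trace property $\tau(\sA \star \sB) \cong \tau(\sB \star \sA)$, one reads off the evaluation of a trace functional on the resulting braid word. Matching this computation with the standard presentation of the coloured HOMFLY polynomial via the Iwahori--Hecke algebra (extracted from the $(\sl_n,\t)$ action, as in \cite{C1,C2}) yields the desired equality. The decategorified projector $\sP^-$ recovers the HOMFLY clasp by \cite{C1}, handling partitions with more than one part. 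Alternatively, since Markov invariance is already established in Theorem \ref{thm:1} and the HOMFLY polynomial is characterized by its values on the unknots together with the skein relation at a single crossing, one only needs to check the latter on one $\sT'_i$.

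The main obstacle is bookkeeping. Three gradings $\{1\}, [[1]], [1]$, the non-balanced shifts chosen in $\sE_i, \sF_i$ and in $\sT'_i$ versus $\sT_i$, and writhe/framing corrections conspire to produce the $a^m q^n$ prefactor; extracting it requires a careful strand-by-strand accounting. The substitution $a \mapsto aq$ is itself an artifact of the particular internal grading conventions on the $\sE_i, \sF_i$ kernels. No new structural input is needed beyond what is already present in Theorem \ref{thm:action}, Theorem \ref{thm:1}, and the results of \cite{C1,C2}.
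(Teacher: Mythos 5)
For one-part partitions your strategy agrees in substance with the paper's: both reduce to a trace-style computation, then to evaluations on unknots, and your HKR calculation of $\sH(\bigcirc_{(k)})$ is exactly what the paper does in Section \ref{sec:example1}. The paper packages the reduction-to-unknots step via the annulus web algebra (cutting crossings into webs, then using that closed webs are generated by coloured unknots), whereas you phrase it as matching against a Hecke-style trace; these are essentially equivalent formulations of the MOY/web calculus. One caution: your alternative route via ``skein relation at a single crossing'' is only clean in the uncoloured setting. A crossing of strands labelled $(k)$ and $(l)$ satisfies a higher-order skein relation depending on $(k,l)$, so ``one only needs to check the latter on one $\sT'_i$'' understates the work; you would have to verify the decategorified crossing relation for each pair of colours, which is effectively the same content as the paper's web decomposition.

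The genuine gap is in the multi-part case. You assert that ``the decategorified projector $\sP^-$ recovers the HOMFLY clasp by \cite{C1},'' but that is not what \cite{C1} proves: it shows that the infinite-twist projector categorifies the Reshetikhin--Turaev clasp, i.e.\ the clasp after specializing $a = q^N$, not the universal HOMFLY clasp. The paper's proof closes this gap with a specialization argument: it establishes $P_L(q,q^N) = \sP_L(q,q^{N+1},-1)$ for every $N > 0$ by invoking \cite{C1} at the RT level, then uses that these infinitely many specializations determine the coloured HOMFLY polynomial. Without this step your argument does not reach the HOMFLY identity for partitions with more than one part. Also, you flag ``bookkeeping'' of $a^m q^n$ as the main obstacle, but the normalization factor is routine; the real content you are missing is the RT-to-HOMFLY bridge for the projector.
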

\begin{proof}
In the rest of the proof we will ignore extra factors $a^mq^n$. Let us first suppose that $L$ is the closure of a braid $\beta$ coloured by partitions $(k)$ with only one part. One can compute $P_L(q,a)$ from $\beta$ by applying a trace. Moreover, as explained (for instance) in \cite[Section 6]{CKM} one can break down the crossings in $L$ into web diagrams since the crossing element is a linear combination of webs. This reduces the evaluation of $P_L(q,a)$ to evaluating this trace on webs diagrams.  

As usual one views the trace of a web diagram as the closure on that diagram on the annulus. The algebra of webs on the annulus is generated (as an algebra) by unknots labeled by one part partitions (where multiplication is given by glueing one annulus inside the other). This reduces the computation of $P_L(q,a)$ to the case $L = \bigcirc_{(k)}$ (which is described in (\ref{eq:D})). 

Similarly, the evaluation of $\sP_L(q,a,-1)$ can be reduced to the case $L = \bigcirc_{(k)}$. This case is computed in Section \ref{sec:example1} and agrees with (\ref{eq:D}) once you replace $a$ with $aq$. This completes the proof when $L$ contains partitions with only one part. 

To deal with arbitrary partitions we will show that $P_L(q,q^N) = \sP_L(q,q^{N+1},-1)$ for all $N > 0$ (i.e. the specializations $a=q^N$ for all $N > 0$). Note that $P_L(q,q^N)$ recovers the corresponding $SL(N)$ Reshetikhin--Turaev (RT) invariant and that we know $P_L(q,q^N) = \sP_L(q,q^{N+1},-1)$ are equal if $L$ is coloured by one part partitions. On the other hand, in \cite{C1} we showed that, when evaluating RT invariants, the projectors (clasps) for arbitrary partitions can be constructed as infinite twists. Since this construction only uses the braiding group action it follows that $P_L(q,q^N) = \sP_L(q,q^{N+1},-1)$ holds for any $L$. 
\end{proof}

\section{Some differentials}\label{sec:differentials}

To simplify notation we will omit the $\{\cdot\}$ grading in this section. We also fix $N > 0$. Note that 
$$HH^1(\A^k) = \Ext_{\A^k \times \A^k}^1(\Delta_* \O_{\A^k}, \Delta_* \O_{\A^k}) \cong \oplus_i \k[x_1,\dots,x_k] \partial_{x_i}$$
so that 
$$\gamma_{1^k} := \sum_i x_i^N \partial_{x_i} \in \Hom_{\A^k \times \A^k}(\Delta_* \O_{\A^k}, \Delta_* \O_{\A^k} [1]).$$
Since this element is $S_k$-invariant it descends to $HH^1(\k[e_1,\dots,e_k])$. We denote by 
$$\gamma_{\uk} \in \Hom_{\A_\uk \times \A_\uk}(\Delta_* \O_{\A_\uk}, \Delta_* \O_{\A_\uk} [1])$$ 
the corresponding element obtained by descending $\gamma_{1^{k_1}} \otimes \dots \otimes \gamma_{1^{k_n}}$. 

Now, given a braid $\beta$ with endpoints marked $\uk$, we have
$$\tau(\sT'_\beta) = HH_*(\sT'_\beta) \cong \Ext^*_{\A_\uk \times \A_\uk}(\Delta_* \omega_{\A_\uk}^{-1}[[-\dim \A_\uk]], \sT'_\beta)$$
where $\omega_{\A_\uk}$ denotes the canonical bundle. Thus, we have an action of $HH^1(\A_\uk)$ coming from precomposing on the left with 
$$HH^1(\A_\uk) \cong \Hom_{\A_\uk \times \A_\uk}(\Delta_* \omega_{\A_\uk}^{-1}, \Delta_* \omega_{\A_\uk}^{-1} [1]).$$ 
We denote by $d_N$ the action of $\gamma_\uk$. Note that $d_N^2=0$ since $\gamma_\uk$ belongs to $HH^1$. Moreover, $d_N$ commutes with the differential $d$ used in the definition of the complex $\sT'_\beta$ because of associativity of composition. Thus, we get a bicomplex with differentials $d$ and $d_N$. 

\begin{Theorem}\label{thm:2}
Suppose $L = \widehat{\beta}$ where $\beta$ is a coloured braid. If we denote by $\sH_N(L)$ the cohomology of $\tau(\sT'_\beta)$ equipped with the total differential $d+d_N$ then, up to an overall grading shift, $\sH_N(L)$ defines a doubly graded link invariant. 
\end{Theorem}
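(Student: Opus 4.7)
The plan is to run the proof of Theorem~\ref{thm:1} with the bicomplex structure $(d,d_N)$ carried along, checking that each natural isomorphism witnessing invariance there intertwines the $\gamma_\uk$-action on either side. Since $\tau(\sT'_\beta)$ is already a triply graded invariant, the task reduces to upgrading each move (braid relation, cyclicity of $HH_*$, Markov stabilization) to an isomorphism of bicomplexes rather than just of complexes.

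First I would check that $(d,d_N)$ is a bicomplex. One has $d_N^2 = 0$ because the cup square of $\gamma_\uk \in HH^1(\A_\uk)$ vanishes: via HKR, $HH^*(\k[e_1,\dots,e_k])$ is polynomial in the even generators over an exterior algebra in commuting derivations, and $\gamma_\uk \wedge \gamma_\uk = 0$ on the nose. The anticommutation $d \cdot d_N + d_N \cdot d = 0$ is the associativity of composition already noted in the setup above. For the braid relations and the cyclicity $\tau(\sA \star \sB) \cong \tau(\sB \star \sA)$, the intertwining is essentially formal: every isomorphism produced by the categorical $(\sl_n,\theta)$-action on $\K_n$ is a $\k$-linear 2-morphism and hence commutes with precomposition by the endpoint cocycle $\gamma_\uk$, while cyclicity of $HH_*$ is a formal property compatible with the $HH^*$-module action.

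The main obstacle is the Markov stabilization. I would retrace the chain (\ref{line:1})--(\ref{line:9}) proving (\ref{eq:tr1}) while carrying $\gamma$ through every step. All intermediate isomorphisms are again 2-morphisms in $\K_n$, so the nontrivial content reduces to the base case $\ell=1$: show that the identification $\Psi'_1(\sT'_n) \cong \O_\Delta[-1][[1]]$ from Lemma~\ref{lem:technical} intertwines $\gamma_{(\uk,1)} = \gamma_\uk \otimes 1 + 1 \otimes y^N \partial_y$ with $\gamma_\uk$. Concretely, this requires verifying that the Gaussian elimination step (\ref{eq:C}) is compatible with the action of $y^N \partial_y$: the spurious $\k[y]$-factor appearing in (\ref{eq:A}) and (\ref{eq:B}) is eliminated, and I expect the residual action of $y^N \partial_y$ on the surviving copy of $\O_\Delta[[1]]\{-2\}$ to vanish. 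Tracking an additional cochain through this degenerate square is the step I expect to be the most delicate.

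Finally, for links coloured by arbitrary partitions the clasps $\sP^-$ are infinite twists in the braid group action on $\K_n$, built from structure maps that are themselves 2-morphisms in $\K_n$ and hence automatically compatible with $d_N$. Combining these ingredients yields a well-defined doubly graded invariant up to an overall grading shift, matching the shift already appearing in Theorem~\ref{thm:1}.
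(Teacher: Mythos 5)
Your outline has the right shape (promote the proof of Theorem~\ref{thm:1} to a bicomplex argument, with $\ell=1$ Markov stabilization as the crux), and your computation of $d_N^2=0$ is fine, but two of the steps you wave off as ``essentially formal'' or ``expected'' are exactly the nontrivial content of the paper's proof. First, the claim that conjugation invariance and the intermediate Markov manipulations are automatic because the relevant isomorphisms are $\k$-linear 2-morphisms is not correct as stated: the endpoint weight $\uk$ \emph{changes} as you compose $\sT'_\beta$, $\sE$, $\sF$, so $\gamma_\uk$ on one end and $\gamma_{\uk'}$ on the other are cocycles for \emph{different} algebras, and the cyclicity isomorphism $\tau(\sA\star\sB)\cong\tau(\sB\star\sA)$ swaps which endpoint acts. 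Showing that the two actions agree after passing through the braid requires the identity $II\gamma_\uk = \gamma_{\uk'}II$ for each $\sE$ and $\sF$, which is the content of Lemma~\ref{lem:2}; you do not state or prove it. Second, your base-case argument for the Markov move correctly locates the issue (showing the residual $y^N\partial_y$-action dies under the Gaussian elimination collapsing $\Psi'_1(\sT'_n)$ to $\O_\Delta[-1][[1]]$), but you leave it as ``I expect the residual action\dots to vanish.'' That vanishing is precisely Lemma~\ref{lem:3} in the paper: one factors $\alpha$ through $(\Psi'_1)^L$, identifies $\Psi'_1(\Psi'_1)^L(\Delta_*S^{-1}_{\A_{\uk,1}})$ as $\Delta_*S^{-1}_{\A_{\uk,1}}\otimes_\k\k[x_{n+1}]\oplus\Delta_*S^{-1}_{\A_{\uk,1}}\otimes_\k\k[x_{n+1}]\p_{x_{n+1}}[[-1]]$, tracks where $x_{n+1}^N\p_{x_{n+1}}$ sends the unit, and shows the projection induced by $\Psi'_1(\O_T\star\Psi_1(\sP))\xrightarrow{\sim}\sP$ kills it. Without this computation and without Lemma~\ref{lem:2} you have identified the scaffolding but not the proof.
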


In the remainder of this section we prove this result. Sometimes we will write $\sH_N(\sT'_\beta)$ instead of $\sH_N(L)$ where $L = \widehat{\beta}$.

If $\beta$ and $\beta'$ are equivalent braids, then $\sT'_\beta$ is homotopic to $\sT'_{\beta'}$, which means that $\sH_N(\sT'_\beta) \cong \sH_N(\sT'_{\beta'})$. Next, to prove invariance under conjugation, we must show that $\sH_N(\sT'_{\beta_1} \star \sT'_{\beta_2}) \cong \sH_N(\sT'_{\beta_2} \star \sT'_{\beta_1})$ for any braids $\beta_1$ and $\beta_2$. This follows as in the proof of Theorem \ref{thm:1} together with the fact that for any braid $\beta$ we have
$$II \gamma_\uk = \gamma_{\uk'} II \in \Hom(\Delta_* \O_{\A_{\uk'}} \star \sT'_\beta \star \Delta_* \O_{\A_\uk}, \Delta_* \O_{\A_{\uk'}} \star \sT'_\beta \star \Delta_* \O_{\A_\uk} [1])$$
where $\uk$ and $\uk'$ label the bottom and top strands of $\beta$ (this equality follows from Lemma \ref{lem:2} below). 
\begin{Remark}
Here we use the convention mentioned at the end of Section \ref{sec:notation}. For instance, $II \gamma_\uk$ denotes the map induced by the identity on the first two factors of $\Delta_* \O_{\A_{\uk'}} \star \sT'_\beta \star \Delta_* \O_{\A_\uk}$ and by $\gamma_\uk$ on the last (right) one. 
\end{Remark}

\begin{Lemma}\label{lem:2}
Consider $\sE \in D(\A_{i-1,j+1} \times \A_{i,j})$. Then 
$$II \gamma_{i,j} = \gamma_{i-1,j+1} II \in \Hom(\Delta_* \O_{\A_{i-1,j+1}} \star \sE \star \Delta_* \O_{\A_{i,j}}, \Delta_* \O_{\A_{i-1,j+1}} \star \sE \star \Delta_* \O_{\A_{i,j}} [1])$$
and likewise if we replace $\sE$ with $\sF$. 
\end{Lemma}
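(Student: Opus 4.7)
The plan is to identify both $II\,\gamma_{i,j}$ and $\gamma_{i-1,j+1}\,II$ with the action of a single vector field on the correspondence variety that carries $\sE$, and to use that this vector field is canonical.

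First I would use that $\Delta_*\O_{\A_\uk}$ is the identity kernel for $\star$, so that both expressions can be rewritten as endomorphisms of $\sE$ itself of cohomological degree $1$: concretely $II\,\gamma_{i,j}$ acts through the right factor $\A_{i,j}$ and $\gamma_{i-1,j+1}\,II$ through the left factor $\A_{i-1,j+1}$. The problem reduces to showing that these two endomorphisms of $\sE$ coincide.

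Next I would invoke the HKR identification $\HH^1(\A_\uk)\cong H^0(\A_\uk,T_{\A_\uk})$, under which, by the very definition of $\gamma_\uk$, the class in question corresponds to the $S_{k_1}\times\cdots\times S_{k_n}$-invariant global vector field $\sum_\ell x_\ell^N\partial_{x_\ell}$ on $\A^{|\uk|}$, descended to $\A_\uk$. Recall that $\sE$ (and similarly $\sF$) is, up to a grading shift, the structure sheaf $\O_Z$ of the correspondence $Z=\A_{i-1,1,j}$ sitting inside $\A_{i-1,j+1}\times\A_{i,j}$ via the two finite maps $\pi_1,\pi_2$ of (\ref{eq:corr}). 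For a kernel of the form $\O_Z$, the action of a global vector field $V$ on $\A_\uk$ through the convolution factor $\A_\uk$ is realized as the action on $\O_Z$ of the pulled-back vector field along the relevant projection $Z\to\A_\uk$ (which is tangent to $Z$ in our situation, since the projections are surjective finite).

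The key (essentially tautological) observation is then that both $\pi_1^*\gamma_{i-1,j+1}$ and $\pi_2^*\gamma_{i,j}$ equal the \emph{same} vector field on $Z$, namely the descent to $Z=\A^{i+j}/(S_{i-1}\times S_j)$ of the fully $S_{i+j}$-invariant field $\sum_{\ell=1}^{i+j} x_\ell^N\partial_{x_\ell}$ on $\A^{i+j}$; this holds because all three classes $\gamma_{i-1,j+1}$, $\gamma_{i,j}$, $\gamma_{i-1,1,j}$ arise as descents of a single $S_{i+j}$-invariant vector field, and these descents are compatible with the quotient maps $\pi_1,\pi_2$. Consequently both $II\,\gamma_{i,j}$ and $\gamma_{i-1,j+1}\,II$ are the endomorphism of $\O_Z$ induced by that one vector field, so they agree. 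The case of $\sF$ is formally identical, the only change being that the roles of $\pi_1$ and $\pi_2$ are exchanged.

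The main obstacle will be making precise the identification of the convolution action of an $\HH^1$-class on $\O_Z$ with the Lie-derivative action of the corresponding vector field pulled back to $Z$; however, since every variety in sight is smooth and the projections $\pi_1,\pi_2$ are finite and flat, the HKR isomorphism is functorial enough under these maps for the argument to go through without difficulty.
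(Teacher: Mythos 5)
Your proposal is correct and takes essentially the same route as the paper: both arguments identify $II\,\gamma_{i,j}$ and $\gamma_{i-1,j+1}\,II$ as the endomorphism of the kernel $\O_Z$ (with $Z \cong \A_{i-1,1,j}$) obtained by descending the single $S_{i+j}$-invariant class $\gamma_{1^{i+j}}$ from $\A^{i+j}$ through the two projections of the correspondence, and conclude that the two descents coincide on $Z$. The only discrepancy is a harmless index swap (relative to the labeling in (\ref{eq:corr}), it should be $\pi_1^*\gamma_{i,j}$ and $\pi_2^*\gamma_{i-1,j+1}$), which does not affect the argument.
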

\begin{proof}
$\sE$ is the kernel inducing the correspondence
$$\A_i \times \A_j \xleftarrow{\pi_1} \A_{i-1} \times \A^1 \times \A_j \xrightarrow{\pi_2} \A_{i-1} \times \A_{j+1}.$$
On the other hand $II \gamma_{i,j}$ is the element obtained by descending $\gamma_{1^{i+j}}$ via the map 
$$\A^{i+j} \rightarrow \A_{i-1} \times \A^1 \times \A_j \xrightarrow{\pi_1} \A_i \times \A_j.$$ 
Likewise $\gamma_{i-1,j+1} II$ is the map obtained by descending $\gamma_{1^{i+j}}$ again via the map 
$$\A^{i+j} \rightarrow \A_{i-1} \times \A^1 \times \A_j \xrightarrow{\pi_2} \A_{i-1} \times \A_{j+1}.$$
The result follows. 
\end{proof}

Finally, we need invariance under the Markov move. As in the proof of Theorem \ref{thm:1} we can significantly reduce what we must show. First, since projectors pass through crossings we can assume each strand is coloured by a partition $(\ell)$ with only one part. By breaking up this strand into $\ell$ strands coloured by $1$ and using Lemma \ref{lem:2} we can further reduce to the case $\ell=1$. 

Using the homotopy equivalence from (\ref{eq:6}) we know that 
$$\sH_N(\Psi_1'(\sT'_n \star \Psi_1(\sP))) = \sH_N(\Psi_1'((\O_S \rightarrow \O_\Delta) \star \Psi_1(\sP))) \xrightarrow{\sim} \sH_{-N}(\Psi_1'(\O_T[[1]]\{-2\} \star \Psi_1(\sP)))$$
where $\sP \in \Kom(D(\A_{\uk,1} \times \A_{\uk,1}))$. Recall that we have the standard exact sequence
$$\O_T \{-2\} \rightarrow \O_S \rightarrow \O_\Delta$$
where $S,T$ are the varieties corresponding to the last two strands. Moreover, $\Psi_1'(\O_T \star \Psi_1(\sP)) \xrightarrow{\sim} \sP$ and Lemma \ref{lem:3} implies that $\sH_N(\sT'_n \star \Psi_1(\sP)) \cong \sH_N(\sP)$ (up to a grading shift). This completes the proof of Theorem \ref{thm:2}. 

\begin{Lemma}\label{lem:3}
For $\sP \in \Kom(D(\A_{\uk,1} \times \A_{\uk,1}))$ the following diagram commutes
$$\xymatrix{
\Ext^j_{\A_{\uk,1,1} \times \A_{\uk,1,1}}(\Delta_* S_{\A_{\uk,1,1}}^{-1}, \O_T \star \Psi_1(\sP)) \ar[r]^{\gamma_{\uk,1,1}} \ar[d]^{\varphi} & \Ext^{j+1}_{\A_{\uk,1,1} \times \A_{\uk,1,1}}(\Delta_* S_{\A_{\uk,1,1}}^{-1},\O_T \star \Psi_1(\sP)) \ar[d]^{\varphi'} \\
\Ext^j_{\A_{\uk,1} \times \A_{\uk,1}}(\Delta_* S_{\A_{\uk,1}}^{-1}, \sP) \ar[r]^{\gamma_{\uk,1}} & \Ext^{j+1}_{\A_{\uk,1} \times \A_{\uk,1}}(\Delta_* S_{\A_{\uk,1}}^{-1}, \sP) 
}$$
where $S^{-1}_X := \omega^{-1}_X [[-\dim X]]$ for a variety $X$ and where isomorphisms $\varphi$ and $\varphi'$ are induced by the isomorphism $\Psi_1'(\O_T \star \Psi_1(\sP)) \xrightarrow{\sim} \sP$. 
\end{Lemma}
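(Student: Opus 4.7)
The plan is to verify the commutativity by making the isomorphism $\varphi$ explicit and then checking compatibility with the $\gamma$-action in two pieces.

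First, I would unwind $\varphi$ through the chain
$$\Psi_1'(\O_T \star \Psi_1(\sP)) \;\cong\; \Psi_1'(\O_T) \star \sP \;\cong\; \O_\Delta \star \sP \;\cong\; \sP,$$
where the first isomorphism is the trace property \eqref{eq:trace1} and the second is the identification $\Psi_1'(\O_T) \cong \O_\Delta$. This second identification is a direct generalization of the computation in the proof of Lemma \ref{lem:technical}, now carried out with the base $\A_{\uk}$ carried along: the preimage $\Delta^{-1}(T) \subset \A_{\uk,1} \times \A_1 \times \A_{\uk,1}$ works out to be the locus $\{((a,B),B,(a,B))\}$, and (crucially) here the derived pullback is transverse, so no extra $\Tor$ contribution appears as it did in \eqref{eq:A} for $\Delta^*\O_\Delta$. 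The map $\pi$ then restricts to an isomorphism onto the diagonal in $\A_{\uk,1} \times \A_{\uk,1}$, giving $\Psi_1'(\O_T) \cong \O_\Delta$ cleanly.

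Next, I would compare the two $\gamma$-actions under $\varphi$. Both $\gamma_{\uk,1,1}$ and $\gamma_{\uk,1}$ are descended from the universal element $\sum_i x_i^N \partial_{x_i}$ on unsymmetrized affine space. Splitting
$$\gamma_{1^{|\uk|+2}} \;=\; \gamma_{1^{|\uk|+1}}(x_1,\dots,x_{|\uk|+1}) \;+\; x_{|\uk|+2}^N \partial_{x_{|\uk|+2}}$$
and descending gives $\gamma_{\uk,1,1} = \tilde{\gamma}_{\uk,1} + \gamma'$, where $\tilde{\gamma}_{\uk,1}$ is the image of $\gamma_{\uk,1}$ under the natural augmentation by the second $\A_1$ factor and $\gamma'$ corresponds to the extra derivation in the traced-out direction. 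The piece $\tilde{\gamma}_{\uk,1}$ intertwines with $\gamma_{\uk,1}$ on the bottom row by a Lemma \ref{lem:2}-style naturality argument, lifted to the unsymmetrized spaces $\A^{|\uk|+1} \to \A_{\uk,1}$ and $\A^{|\uk|+2} \to \A_{\uk,1,1}$ and transported through the explicit correspondence defining $\varphi$. It remains to show that the extra piece $\gamma'$ acts trivially on the right-hand Ext group after $\varphi$.

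The main obstacle is verifying precisely that $\gamma'$ becomes trivial under $\varphi$. Conceptually this is because $x_{|\uk|+2}$ is exactly the coordinate killed by the trace $\Psi_1'$ (after the swap induced by $\O_T$), so a derivation along this coordinate should not survive. Concretely, I expect this to require an explicit cocycle computation: one represents $\gamma'$ on a Koszul-type resolution of $\Delta_* S^{-1}_{\A_{\uk,1,1}}$ and tracks it through the $\pi_* \Delta^*$ defining $\Psi_1'$, showing that the resulting class is null-homotopic after composition with $\varphi^{-1}$. The mechanics should parallel the Koszul resolution and Gaussian-elimination argument in the proof of Lemma \ref{lem:technical}, with the additional bookkeeping of tracking the derivation $\gamma'$ alongside.
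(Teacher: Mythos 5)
Your overall plan has the right shape: you correctly identify the key decomposition of $\gamma_{\uk,1,1}$ into a piece that intertwines with $\gamma_{\uk,1}$ and an extra derivation in the traced-out direction, and you correctly flag that the crux of the lemma is showing the extra piece dies after composing with $\varphi$. You have also correctly identified the conceptual reason why it should die. But the proposal stops exactly where the proof begins: you write that you ``expect this to require an explicit cocycle computation'' and describe what the mechanics ``should'' look like, without carrying the computation out. That computation is not a formality; it is essentially the entire content of the lemma, and it occupies the bulk of the paper's proof (tracking the unit of adjunction, the explicit decomposition of $\Psi_1'(\Psi'_1)^L(\Delta_* S^{-1}_{\A_{\uk,1}})$ into two summands, the matrix form of $\Psi'_1(x_{n+1}^N\partial_{x_{n+1}})$ in display (\ref{eq:7}), and the identification of the map $\Psi_1'(h)$ composed with $\varphi'$ as zero on one summand and projection on the other). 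As written, the proposal is a plan for a proof, not a proof.

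A secondary issue: your intertwining step for $\tilde{\gamma}_{\uk,1}$ is stated loosely (``a Lemma \ref{lem:2}-style naturality argument, lifted to the unsymmetrized spaces \ldots\ transported through the explicit correspondence''). The paper avoids this vagueness by reformulating $\varphi$ as the adjunction isomorphism for $(\Psi'_1)^L \dashv \Psi'_1$, using that $(\Psi'_1)^L(\Delta_* S^{-1}_{\A_{\uk,1}}) = \Delta_* S^{-1}_{\A_{\uk,1,1}}$. Under this reformulation the top composition becomes $\alpha \mapsto \varphi'(\alpha \circ (\Psi'_1)^L(\gamma_{\uk,1}))$ automatically (naturality of adjunction), and the decomposition you want is the precise equality $\gamma_{\uk,1,1} - (\Psi'_1)^L(\gamma_{\uk,1}) = x_{n+1}^N\partial_{x_{n+1}}$. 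This makes your ``$\tilde{\gamma}_{\uk,1}$'' a concrete object, $(\Psi'_1)^L(\gamma_{\uk,1})$, rather than something you must argue intertwines by hand. I'd recommend adopting this framing before attempting the remaining computation; it cleanly reduces the lemma to showing $\varphi'(\alpha \circ x_{n+1}^N\partial_{x_{n+1}}) = 0$, which you then actually need to prove.
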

\begin{proof}
As before, we will ignore shifts in $\{\cdot\}$. Note that the left adjoint of $\Psi'_1: \A_{\uk,1} \rightarrow \A_{\uk}$ is the functor 
$$(\Psi'_1)^L(\cdot) = \Delta_*(\pi^*(\cdot) \otimes p^* S_{\A^1}^{-1})$$
where $p: \A_\uk \times \A^1 \times \A_\uk \rightarrow \A^1$ is the projection. Now take 
$$\alpha \in \Ext^j_{\A_{\uk,1,1} \times \A_{\uk,1,1}}(\Delta_* S_{\A_{\uk,1,1}}^{-1}, \O_T \star \Psi_1(\sP))$$ 
and consider the following diagram 
\tiny
$$\xymatrix{
\Delta_* S^{-1}_{\A_{\uk,1}} [[-1]] \ar[rr]^{\gamma_{\uk,1}} \ar[d]^{\adj} & & \Delta_* S^{-1}_{\A_{\uk,1}} \ar[d]^{\adj} \ar@/^1pc/[drr]^{\varphi(\alpha)} & & \\
\Psi_1' (\Psi'_1)^L (\Delta_* S^{-1}_{\A_{\uk,1}}) [[-1]] \ar[rr]^{\Psi'_1 (\Psi'_1)^L (\gamma_{\uk,1})} & & \Psi_1' (\Psi'_1)^L (\Delta_* S^{-1}_{\A_{\uk,1}}) \ar[r]^-{\Psi'_1(\alpha)} & \Psi'_1(\O_T \star \Psi_1(\sP)) [[j]] \ar[r]^-{\sim} & \Psi_1'(\O_T) \star \sP [[j]] 
}$$
\normalsize
Note that $(\Psi'_1)^L(S^{-1}_{\A_{\uk,1}}) = S^{-1}_{\A_{\uk,1,1}}$ which explains how $\Psi'_1(\alpha)$ acts. The left square commutes since adjunction is a natural transformation. The triangle on the right commutes by the definition of $\varphi$. 

The composition along the top is the map $\alpha \mapsto \varphi(\alpha) \circ \gamma_{\uk,1}$. On the other hand, the composition along the bottom row is $\alpha \mapsto \varphi'(\alpha \circ (\Psi'_1)^L(\gamma_{\uk,1}))$. So it suffices to show that $\varphi'(\alpha \circ (\Psi'_1)^L(\gamma_{\uk,1})) = \varphi'(\alpha \circ \gamma_{\uk,1,1})$. The difference $\gamma_{\uk,1,1} - (\Psi'_1)^L(\gamma_{\uk,1})$ is equal to $x_{n+1}^N \p_{x_{n+1}}$ so it remains to show that $\varphi'(\alpha \circ x_{n+1}^N \p_{x_{n+1}}) = 0$. 

The map $\varphi'(\alpha \circ x_{n+1}^N \p_{x_{n+1}})$ is given by the composition 
\begin{align*}
\Delta_* S_{\A_{\uk,1}}^{-1} & \xrightarrow{\adj} \Psi_1' (\Psi'_1)^L (\Delta_* S^{-1}_{\A_{\uk,1}}) = \Psi_1' (\Delta_* S^{-1}_{\A_{\uk,1,1}}) \xrightarrow{\Psi'_1(x_{n+1}^N \p_{x_{n+1}})} \Psi_1' (\Delta_* S^{-1}_{\A_{\uk,1,1}} [[1]]) \\
& \xrightarrow{\Psi'_1(\alpha)} \Psi'_1(\O_T \star \Psi_1(\sP))[[j+1]] \xrightarrow{\sim} \sP [[j+1]].
\end{align*}
One can check that 
$$\Psi_1' (\Psi'_1)^L (\Delta_* S^{-1}_{\A_{\uk,1}}) \cong \Delta_* S^{-1}_{\A_{\uk,1}} \otimes_\k \k[x_{n+1}] \oplus \Delta_* S^{-1}_{\A_{\uk,1}} \otimes_\k \k[x_{n+1}] \p_{x_{n+1}} [[-1]].$$
Then the composition of the first two maps is given by 
\small
\begin{equation}\label{eq:7}
\begin{matrix}
 & & \Delta_* S^{-1}_{\A_{\uk,1}} \otimes_\k \k[x_{n+1}] \p_{x_{n+1}} [[-1]] & & \\ 
\Delta_* S^{-1}_{\A_{\uk,1}} & \xrightarrow{\id} & \bigoplus \Delta_* S^{-1}_{\A_{\uk,1}} \otimes_\k \k[x_{n+1}] & \xrightarrow{\cdot x_{n+1}^N \p_{x_{n+1}}} & \Delta_* S^{-1}_{\A_{\uk,1}} \otimes_\k \k[x_{n+1}] \p_{x_{n+1}} \bigoplus \\
 & & & & \Delta_* S^{-1}_{\A_{\uk,1}} \otimes_\k \k[x_{n+1}] [[1]] 
\end{matrix}
\end{equation}
\normalsize
where the missing arrows are all zero. 

On the other hand, to understand $\Psi'_1(\alpha)$, consider the isomorphisms
\begin{align*}
\Ext^j_{\A_{\uk,1,1} \times \A_{\uk,1,1}}(\Delta_* S^{-1}_{\A_{\uk,1,1}}, \O_T \star \Psi_1(\sP)) 
&\cong \Ext^j_{\A_{\uk,1,1} \times \A_{\uk,1,1}}((\Psi'_1)^L (\Delta_* S^{-1}_{\A_{\uk,1}}),  \O_T \star \Psi_1(\sP))  \\
&\cong \Ext^j_{\A_{\uk,1} \times \A_{\uk,1}}(\Delta_* S^{-1}_{\A_{\uk,1}}, \Psi_1'(\O_T \star \Psi_1(\sP))) \\
&\cong \Ext^j_{\A_{\uk,1} \times \A_{\uk,1}}(\Delta_* S^{-1}_{\A_{\uk,1}}, \sP).
\end{align*}
The image of $\beta: \Delta_* S^{-1}_{\A_{\uk,1}} \rightarrow \sP[[j]]$ under these isomorphisms is the composition 
$$(\Psi'_1)^L (\Delta_* S^{-1}_{\A_{\uk,1}}) \xrightarrow{(\Psi'_1)^L(\beta)} (\Psi'_1)^L(\sP)[[j]] = \Delta_* p^* (S^{-1}_{\A^1}) \star \Psi_1(\sP)[[j]] \xrightarrow{h}  (\O_T \star \Psi_1(\sP)) [[j]]$$
where $p: \A_{\uk,1,1} \rightarrow \A^1$ projects onto the last factor. Here $h$ is induced by the map $\Delta_* p^* (S^{-1}_{\A^1}) \rightarrow \O_T$ which comes from the standard exact sequence
$$\O_T \{-2\} \rightarrow \O_S \rightarrow \O_\Delta$$
after noting that $\Delta_* p^* (S^{-1}_{\A^1}) \cong \O_\Delta [[-1]] \{2\}$. Thus, we can assume that $\alpha$ is such a composition for some $\beta$. Applying $\Psi_1'$ we find that $\Psi'_1(\alpha)$ factors as 
\tiny
$$\begin{matrix}
\Delta_* S^{-1}_{\A_{\uk,1}} \otimes_\k \k[x_{n+1}] \p_{x_{n+1}} [[-1]] & \xrightarrow{\Psi_1' (\Psi'_1)^L(\beta)} & \sP \otimes_\k \k[x_{n+1}] \p_{x_{n+1}} [[j-1]] & & \\
\bigoplus \Delta_* S^{-1}_{\A_{\uk,1}} \otimes_\k \k[x_{n+1}] & \xrightarrow{\Psi_1' (\Psi'_1)^L(\beta)} & \bigoplus \sP \otimes_\k \k[x_{n+1}] [[j]] & \xrightarrow{\Psi_1'(h)} & \Psi_1'(\O_T \star \Psi_1(\sP)) [[j]]
\end{matrix}.$$
\normalsize
Finally, the composition of $\Psi_1'(h)$ with the isomorphism $\Psi_1'(\O_T \star \Psi_1(\sP))[[j]] \xrightarrow{\sim} \sP[[j]]$ gives a map which is zero on the summand $\sP \otimes_\k \k[x_{n+1}] \p_{x_{n+1}} [[j-1]]$ and the natural projection map $\sP \otimes_\k \k[x_{n+1}][[j]] \rightarrow \sP[[j]]$ on the second summand (which sends $x_{n+1}$ to zero). This fact can be traced back to the map 
$$\Delta_* \O_{\A_1} \otimes_\k \k[x_2] [[-1]] \oplus \Delta_* \O_{\A_1} \otimes_\k \k[x_2] = \Psi_1'(\Delta_* \O_{\A_{1,1}}) \rightarrow \Psi'_1(\O_T) \cong \Delta_* \O_{\A_1}$$
which, as we saw in the proof of Lemma \ref{lem:technical}, acts by zero on the first summand and by the natural projection map on the second summand. In conclusion, the composition 
$$\Psi_1' (\Psi'_1)^L (\Delta_* S^{-1}_{\A_{\uk,1}}) [[1]] \xrightarrow{\Psi'_1(\alpha)} \Psi'_1(\O_T \star \Psi_1(\sP))[[j+1]] \xrightarrow{\sim} \sP [[j+1]]$$
is isomorphic to the composition 
$$\begin{matrix}
\Delta_* S^{-1}_{\A_{\uk,1}} \otimes_\k \k[x_{n+1}] \partial_{x_{n+1}} & \xrightarrow{\Psi'_1 (\Psi'_1)^L(\beta)} & \sP \otimes_\k \k[x_{n+1}] \p_{x_{n+1}} [[j]] & & \\
\bigoplus \Delta_* S^{-1}_{\A_{\uk,1}} \otimes_\k \k[x_{n+1}] [[1]] & \xrightarrow{\Psi'_1 (\Psi'_1)^L(\beta)} & \bigoplus \sP \otimes_\k \k[x_{n+1}][[j+1]] & \xrightarrow{\pi} & \sP[[j+1]]
\end{matrix}$$
where $\pi$ is the natural projection map from the lower summand. The composition of this with (\ref{eq:7}) is clearly zero and hence $\varphi'(\alpha \circ x_{n+1}^N \p_{x_{n+1}}) = 0$. 
\end{proof}
\begin{Remark}
Note that in the proof above of Lemma \ref{lem:3} the observation we used is that the difference $\gamma_{\uk,1,1} - \Psi_1(\gamma_{\uk,1})$ is of the form $f \p_{x_{n+1}}$ for some function $f$ on $\A_{\uk,1,1}$. 
\end{Remark}

\section{Examples}\label{sec:examples}

For a partition $\uk = (k_1,\dots,k_n)$ be a partition we denote by $\sH(\bigcirc_\uk)$ the triply graded homology of the unknot labeled by $\uk$. We will compute this invariant when $\uk = (k)$ and $\uk = (1^2)$. Its Poincar\'e polynomial is denoted $\sP_{\bigcirc_\uk}(q,a,t)$ where the shifts $\{1\},[[1]]$ and $[1]$ are kept track of by $q,-a^2,t$ respectively.

\subsection{Cohomology of $\bigcirc_{(k)}$} \label{sec:example1}
If $k=1$ we have 
$$\sH(\bigcirc_{(1)}) \cong \pi_* \Delta^* (\O_\Delta) \cong \pi_* (\O_{\A_1} \oplus \O_{\A^1} [[1]] \{-2\}) \cong \k[x] \oplus \k[x] [[1]] \{-2\}$$  
where $\Delta$ and $\pi$ are the natural maps $\bullet \xleftarrow{\pi} \A_1 \xrightarrow{\Delta} \A_1 \times \A_1$. Hence
$$\sP_{\bigcirc_{(1)}}(q,a,t) = (1+q^{-2}+q^{-4}+\dots)(1-a^2q^{-2}) = \frac{1-a^2q^{-2}}{1-q^{-2}}.$$
Note that $\k[x] \cong \oplus_{i \ge 0} \k \{-2i\}$ which explains why it contributes $(1+q^{-2}+q^{-4}+\dots)$. More generally, $\A_k = \Spec \k[e_1,\dots,e_k]$ and a similar argument shows that 
$$\sH(\bigcirc_{(k)}) \cong \bigotimes_{\ell=1}^k \left(\k[e_\ell] \oplus \k[e_\ell][[1]]\{-2\ell\} \right).$$ 
It follows that 
\begin{align}\label{genser1}
\sP_{\bigcirc_{(k)}}(q,a,t) = \prod_{\ell=1}^k \frac{1-a^2 q^{-2 \ell}}{1-q^{-2\ell}}.
\end{align}

\subsection{Cohomology of $\bigcirc_{(1^2)}$} \label{sec:example2}
First, we need to explicitly identify the projector $\P^-$  which lives in $\Kom_*^-(D(\A_{1,1} \times \A_{1,1}))$. The braid element in this case is isomorphic to 
$$\T = [\E \F \la -1 \ra \rightarrow \id] \cong [\O_S \rightarrow \O_\Delta]$$
where $S$ is the variety described in the proof of Lemma \ref{lem:technical}. If $(x,y)$ are the coordinates of $\A_{1,1}$ then 
$$\T = \left[ \k[x,y] \otimes_{\k[e_1,e_2]} \k[x,y] \rightarrow \k[x,y] \right]$$
as $\k[x,y]$-bimodules (where $e_1=x+y$ and $e_2=xy$ are the usual elementary symmetric functions). Now, squaring and simplifying gives 
\begin{align*}
\T^2 &\cong [\E \F \E \F \la -2 \ra \rightarrow \E \F \la -1 \ra \oplus \E \F \la -1 \ra \rightarrow \id ] \\
&\cong [\E \F \la -3 \ra \xrightarrow{\sUupdot \sUdown - \sUup \sUdowndot} \E \F \la -1 \ra \xrightarrow{\Ucapr} \id] \\
&\cong [\O_S \{2\} \xrightarrow{x \otimes 1 - 1 \otimes x} \O_S \rightarrow \O_\Delta].
\end{align*}
The maps in the first and second lines above are encoded using the diagrammatics of \cite{KL}. The isomorphism between the first and second lines was proved in \cite[Section 10.2]{C1}. The isomorphism between the second and third lines follows from the fact that $\sUupdot \sUdown$ corresponds to $x \otimes 1$ and $\sUup \sUdowndot$ to $1 \otimes x$ (this follows form the action of the nilHecke defined in \cite{KL} or indirectly from the main result in \cite{C2}). 
Now, if we multiply again by $\T$ we get
\begin{align*}
\T^3 &\cong [\E \F \E \F \la -4 \ra \rightarrow \E \F \la -3 \ra \oplus \E \F \E \F \la -2 \ra \rightarrow \E \F \la -1 \ra \oplus \E \F \la -1 \ra \rightarrow \id] \\
&\cong [\E \F \la -5 \ra \xrightarrow{\sUupdot \sUdown + \sUup \sUdowndot - \sUup \smccbub{2} \sUdown} \E \F \la -3 \ra \xrightarrow{\sUupdot \sUdown - \sUup \sUdowndot} \E \F \la -1 \ra \xrightarrow{\Ucapr} \id] \\
&\cong [\O_S \{-4\} \xrightarrow{x \otimes 1 - 1 \otimes y} \O_S \{-2\} \xrightarrow{x \otimes 1 - 1 \otimes x} \O_S \rightarrow \O_\Delta].
\end{align*}
Here the $2$ beside the dot in the second line indicates that we add $2$ dots. The second line follows again from \cite[Section 10.2]{C1} while the third isomorphism is because $\sUup \smccbub{2} \sUdown$ is given by $1 \otimes 1 \mapsto (x+y) \otimes 1 = 1 \otimes (x+y)$. Continuing this way one finds that 
\begin{align}\label{eq:P}
\P^- = \lim_{\ell \rightarrow \infty} \T^{\ell} = \left[\dots \xrightarrow{g} \O_S \{-6\} \xrightarrow{f} \O_S \{-4\} \xrightarrow{g} \O_S \{-2\} \xrightarrow{f} \O_S \rightarrow \O_\Delta \right]
\end{align}
where the maps alternate between $f = x \otimes 1 - 1\otimes x$ and $g = x \otimes 1 - 1 \otimes y$. 

We need to compute $\sH(\bigcirc_{(1^2)}) = \Psi'_1 \Psi'_1 (\P^-)$. Now, using (\ref{eq:C}) and arguing as in the proof of Lemma \ref{lem:technical}, we find that $\Psi'_1(\P^-)$ is isomorphic to the complex 
\tiny
\begin{align*}
\xymatrix{
\dots \ar[r]^-{0} & \O_\Delta \otimes_\k \k[y] \{-4\} \ar@{}[d]^{\bigoplus} \ar[rr]^-{1 \mapsto x-y} & & \O_\Delta \otimes_\k \k[y] \{-2\} \ar@{}[d]^{\bigoplus} \ar[r]^-{0} & \O_\Delta \otimes_\k \k[y] \ar@{}[d]^{\bigoplus} \ar[r]^{\sim} & \O_\Delta \otimes_\k \k[y] \ar@{}[d]^{\bigoplus} \\
\dots \ar[r]^-{0} & \O_\Delta \otimes_\k \k[y] \{-8\} \ar[rr]^-{1 \mapsto x-y} & & \O_\Delta \otimes_\k \k[y] \{-6\} \ar[r]^-{0} & \O_\Delta \otimes_\k \k[y] \{-4\} \ar[r]^{1 \mapsto y} & \O_\Delta \otimes_\k \k[y]\{-2\} }
\end{align*}
\normalsize
where, going to the left, the differentials alternate. Now, consider the exact triangle 
$$\O_\Delta \otimes_\k \k[y] \{-2\} \xrightarrow{1 \mapsto x-y} \O_\Delta \otimes_\k \k[y] \rightarrow \O_\Delta.$$
Applying $\Psi'_1$ leaves us with 
$$\Psi'_1(\O_\Delta \otimes_\k \k[y] \{-2\}) \rightarrow \Psi'_1(\O_\Delta \otimes_\k \k[y]) \rightarrow \k[x][[1]]\{-2\} \oplus \k[x].$$
Thus, applying $\Psi'_1(\cdot)$ to $\Psi'_1(\P^-)$ gives us a complex isomorphic to 
$$
\begin{matrix}
\dots & 0 & \k[x] \{-6\} & 0 & \k[x]\{-2\} & 0 & 0 \\
\dots & 0 & \k[x]\{-8,-10\} & 0 & \k[x] \{-4,-6\} & 0 & \k[x]\{-2\} \\
\dots & 0 & \k[x] \{-12\} & 0 & \k[x]\{-8\} & 0 & \k[x] \{-4\}
\end{matrix}$$
where the top right entry in cohomology bidegree $(0,0)$. The generating series is then 
\begin{align*}\label{genser2}
\sP_{\bigcirc_{(1^2)}}(q,a,t) 
&= \frac{1}{1-q^{-2}} \frac{1}{1-q^{-4}t^2} (q^{-2}t^2 - q^{-2}a^{2} - q^{-4}a^2t^2 + q^{-4}a^4) \\ 
&= \frac{q^{-2}t^2(1-q^{-2}a^2)(1-a^2t^{-2})}{(1-q^{-2})(1-q^{-4}t^2)}.
\end{align*}

\section{Some remarks and speculation}\label{sec:speculation}

\subsection{$SL(N)$-homologies} 

In order to make the differential $d_N$ homogeneous one needs to kill the $[[\cdot]]$ grading. More precisely, one needs to set $[[-1]] = \{-2(-N+1)\}$. Since $[[1]]$ is recorded by $-a^2$ and $\{1\}$ by $q$ this means that the Euler characteristic $\chi_N(L)$ of $\sH_N(L)$ satisfies $\chi_N(L) = \sP_L(q,iq^{-N+1},-1)$. But
$$\sP_{\bigcirc_{(k)}}(q,iq^{-N+1},-1) = \prod_{\ell=1}^k \frac{1-q^{-2N+2-2\ell}}{1-q^{-2\ell}}$$
which, up to sign and a factor of $q$, equals $\qbins{N+k-1}{k}$. In particular, this means that if $L$ is a link labeled by $(k)$ then $\sH_N(L)$ categorifies the RT invariant of $SL(N)$ labeled by the representation $\Sym^k(\C^N)$. Moreover, the homology of the unknot in this case can be shown to be finite-dimensional homology. This implies (using conjugation invariance of the homology) that $\sH_N(L)$ is finite-dimensional for any $L$ labeled by partitions with only one part. 

\subsection{Batalin-Vilkovisky structures}

In Section \ref{sec:differentials} we defined the differential $d_N$ for $N>0$. This was based on the fact that for any algebra $\sA$ and $\sA$-bimodule $\sM$, $HH^*(\sA)$ acts on $HH_*(\sM)$. More generally, under fairly general hypothesis described in \cite[Section 1]{KK}, $HH^*(\sA)$ is a Gerstenhaber algebra and $HH_*(\sM)$ is a Batalin-Vilkovisky (BV) module.

Without going into details (see \cite{KK} for more) this equips $HH^*(\sA)$ with the usual cup product as well as a graded Lie algebra structure
$$\{\cdot,\cdot\}: HH^{p+1}(\sA) \otimes_\k HH^{q+1}(\sA) \rightarrow HH^{p+q+1}(\sA)$$
while $HH_*(\sM)$ carries the standard module structure as well as a graded Lie algebra module structure
\begin{equation}\label{eq:9}
\sL: HH^{p+1}(\sA) \otimes_\k HH_n(\sM) \rightarrow HH_{n-p}(\sM).
\end{equation}
When $p=-1$ we get a map 
$$HH^0(\sA) \otimes_\k HH_n(\sM) \rightarrow HH_{n+1}(\sM).$$
If $\sA$ is commutative then $HH^0(\sA) \cong \sA$ and for $f \in \sA$ we denote by $df$ the map $HH_n(\sM) \rightarrow HH_{n+1}(\sM)$ induced in (\ref{eq:9}) by $f$ (the conditions of being a BV-module implies that $d(fg) = fdg + gdf$). If we take $\sA = \k[x_1, \dots, x_k]$ then we obtain a map 
$$\sum_i d(x_i^N): HH_n(\sM) \rightarrow HH_{n+1}(\sM)$$
for any $\k[x_1, \dots, x_k]$-bimodule $\sM$. One would like this map to give a differential $d_{-N}$ which commutes with $d$ and so that, as in Theorem \ref{thm:2}, the total differential $d+d_{-N}$ defines a doubly graded link invariant $\sH_{-N}(L)$. This would give us a spectral sequence which commences at $\sH(L)$ and converges to $\sH_N(L)$ for any $N \in \Z$. 

On the other hand, if we take $p=0$ then we get a map 
\begin{equation}\label{eq:10}
HH^1(\sA) \otimes_\k HH_n(\sM) \rightarrow HH_n(\sM).
\end{equation}
Since $HH^1(\k[x]) \subset HH^1(\k[x_1, \dots, x_k])$ can be identified with the so-called Witt algebra one would hope that the resulting action from (\ref{eq:10}) agrees with the action of the Witt algebra defined in \cite{KR2} (see the introduction and Theorem 5.6 therein).

Finally, it is worth noting that in \cite[Section 2.3]{BF} and \cite[Corollary 1.1.3]{BG} one obtains a Gestenhaber algebra structure on $\Tor^*_X(\O_Y,\O_Z)$ whenever $Y,Z$ are smooth coisotropic subvarieties inside a smooth Poisson variety $X$ as well as a BV-module structure on $\Ext^*_X(\O_Y,\O_Z)$. In our case each term $\sM$ in the complex $\sT'_\beta$ is a (direct sum of) the structure sheaf of a non-smooth Lagrangian subvarieties inside $\A_\uk \times \A_\uk$ where the latter is equipped with the standard symplectic structure. This suggests that $HH_*(\sM)$ might carry the structure of a Gerstenhaber algebra and $HH^*(\sM)$ that of a BV-module over it.

\appendix 
\section{$\Psi$-functors}

In this section we suppose all varieties are smooth. However, we work over an arbitrary base ring and do not assume properness at any point. The results also hold if we equip all our varieties with an action of $\k^\times$ and work equivariantly. 

Fix a variety $Z$. For any two varieties $Y_1,Y_2$ we define
\begin{itemize}
\item $\Psi_Z: D(Y_1 \times Y_2) \rightarrow D((Y_1 \times Z) \times (Y_2 \times Z))$ and 
\item $\Psi'_Z: D((Y_1 \times Z) \times (Y_2 \times Z)) \rightarrow  D(Y_1 \times Y_2)$
\end{itemize}
via $\Psi_Z := \Delta_* \pi^*$ and $\Psi' := \pi_* \Delta^!$ where $\pi$ and $\Delta$ are the natural projection and diagonal inclusion maps 
$$\pi: Y_1 \times Z \times Y_2 \rightarrow Y_1 \times Y_2 \text{ and } \Delta: Y_1 \times Z \times Y_2 \rightarrow (Y_1 \times Z) \times (Y_2 \times Z).$$
Recall that if $i: Y_1 \rightarrow Y_2$ is an inclusion of smooth varieties then $i^!(\cdot) = i^*(\cdot) \otimes \omega_i [-c]$ where $\omega_i = \omega_{Y_1} \otimes \omega^\vee_{Y_2}|_{Y_1}$ and $c$ is the codimension of the inclusion.

\begin{Proposition}\label{prop:psi}
Let $Y_1,Y_2,Y_3,Z_1,Z_2,Z_3$ be six varieties and suppose 
$$\sP \in D(Y_1 \times Y_2), \sQ \in D(Y_2 \times Y_3), \sP' \in D(Z_1 \times Z_2) \text{ and } \sQ' \in D(Z_2 \times Z_3).$$
Then $(\sQ \boxtimes \sQ') \star (\sP \boxtimes \sP') \cong (\sQ \star \sP) \boxtimes (\sQ' \star \sP')$. 
\end{Proposition}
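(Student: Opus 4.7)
The plan is to simply unwind the definition of convolution, reorder the triple product, and reduce the statement to the fact that pullback, tensor product, and pushforward all distribute over external (box) products. Concretely, I would write
$$(\sQ \boxtimes \sQ') \star (\sP \boxtimes \sP') = p_{13*}\bigl(p_{12}^*(\sP \boxtimes \sP') \otimes p_{23}^*(\sQ \boxtimes \sQ')\bigr),$$
where the $p_{ij}$ are the projections from the sixfold product
$$W := (Y_1 \times Z_1) \times (Y_2 \times Z_2) \times (Y_3 \times Z_3)$$
onto the relevant two pairs of factors.

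Next, I would use the canonical symmetry isomorphism
$$W \cong (Y_1 \times Y_2 \times Y_3) \times (Z_1 \times Z_2 \times Z_3) =: W^Y \times W^Z.$$
Under this identification each $p_{ij}$ becomes a product $q^Y_{ij} \times q^Z_{ij}$, where $q^Y_{ij}: W^Y \to Y_i \times Y_j$ and $q^Z_{ij}: W^Z \to Z_i \times Z_j$ are the ordinary projections used to define the convolutions $\sQ \star \sP$ and $\sQ' \star \sP'$ respectively. With this in hand, three compatibility properties finish the job: (a) pullback along a product of maps commutes with $\boxtimes$, so $p_{ij}^*(\sA \boxtimes \sA') \cong q_{ij}^{Y*}\sA \boxtimes q_{ij}^{Z*}\sA'$; (b) tensor product distributes over $\boxtimes$, giving $(A \boxtimes A') \otimes (B \boxtimes B') \cong (A \otimes B) \boxtimes (A' \otimes B')$; and (c) a Künneth-type statement for pushforward, $p_{13*}(F \boxtimes F') \cong q_{13*}^Y F \boxtimes q_{13*}^Z F'$. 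Applying (a), (b), (c) in sequence identifies the expression above with $(\sQ \star \sP) \boxtimes (\sQ' \star \sP')$.

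The only step that is not purely formal is (c). I would deduce it from flat base change along the Cartesian square expressing $p_{13}$ as the fibre product of $q^Y_{13}$ and $q^Z_{13}$ over $\Spec \k$ (or the base ring). Since the varieties are smooth and the projections from a product onto a factor are flat, base change in the derived sense applies without higher-$\Tor$ corrections, and combined with the projection formula one obtains the desired equality of pushforwards. This is really the heart of the argument; the rest is bookkeeping about the rearrangement of factors. Note that no assumption of properness is needed because everything is phrased in terms of the natural (not necessarily compactly supported) pushforward and the varieties match up so that the projections on the two sides of (c) agree on the nose.
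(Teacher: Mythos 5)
The paper does not actually supply a proof here: it states only that this ``is a fairly straightforward exercise with kernels which we leave up to the reader.'' Your argument is the standard way to fill that gap, and it is correct. Unwinding the convolution on the sixfold product, reshuffling the factors via the canonical symmetry $W\cong W^Y\times W^Z$, and then pushing the $\boxtimes$ through $p_{ij}^*$, $\otimes$, and $p_{13*}$ is exactly the right bookkeeping; and you are right that the only substantive point is the K\"unneth-type identity $p_{13*}(F\boxtimes F')\cong q^Y_{13*}F\boxtimes q^Z_{13*}F'$, which indeed follows by factoring $p_{13}=(q^Y_{13}\times\id)\circ(\id\times q^Z_{13})$ and applying flat base change plus the projection formula to each factor (the relevant projections from a product are flat, so no higher $\Tor$ terms appear, and no properness is needed). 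This is consistent with the paper's standing conventions (smooth varieties, no properness, optional $\k^\times$-equivariance). In short, you have written out the proof the author chose to omit.
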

\begin{proof}
This is a fairly straightforward exercise with kernels which we leave up to the reader. 
\end{proof}

\begin{Corollary}\label{cor:psi}
Let $Y_1,Y_2,Y_3$ be three varieties and suppose 
$$\sP \in D(Y_1 \times Y_2) \ \ \text{ and } \ \ \sQ \in D(Y_2 \times Y_3).$$
Then $\Psi_Z(\sQ \star \sP) \cong \Psi_Z(\sQ) \star \Psi_Z(\sP)$. 
\end{Corollary}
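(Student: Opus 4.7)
The plan is to reduce the statement to an application of Proposition \ref{prop:psi} by recognizing $\Psi_Z$ as external product with the diagonal kernel on $Z \times Z$.

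First I would establish the key identification
$$\Psi_Z(\sP) \cong \sP \boxtimes \O_{\Delta_Z} \in D(Y_1 \times Y_2 \times Z \times Z),$$
where $\Delta_Z \subset Z \times Z$ is the diagonal and we implicitly reorder the factors to view the right-hand side as a kernel in $D((Y_1 \times Z) \times (Y_2 \times Z))$. This is immediate from the definition $\Psi_Z = \Delta_* \pi^*$: the pullback $\pi^*\sP$ simply repeats $\sP$ along the extra $Z$-factor, while $\Delta_*$ collapses the two copies of $Z$ to the diagonal. So $\Psi_Z(\sP)$ is the external product of $\sP$ with $\O_{\Delta_Z}$, after reshuffling factors.

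Next, with both $\Psi_Z(\sP)$ and $\Psi_Z(\sQ)$ in this form, I would apply Proposition \ref{prop:psi} with $Z_i = Z$ and $\sP' = \sQ' = \O_{\Delta_Z}$ to obtain
$$\Psi_Z(\sQ) \star \Psi_Z(\sP) \;\cong\; (\sQ \boxtimes \O_{\Delta_Z}) \star (\sP \boxtimes \O_{\Delta_Z}) \;\cong\; (\sQ \star \sP) \boxtimes (\O_{\Delta_Z} \star \O_{\Delta_Z}).$$
Since $\O_{\Delta_Z}$ is the identity kernel of $D(Z \times Z)$ under convolution, $\O_{\Delta_Z} \star \O_{\Delta_Z} \cong \O_{\Delta_Z}$. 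Hence
$$\Psi_Z(\sQ) \star \Psi_Z(\sP) \cong (\sQ \star \sP) \boxtimes \O_{\Delta_Z} \cong \Psi_Z(\sQ \star \sP),$$
where the last isomorphism is the identification from the first step applied to $\sQ \star \sP$.

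The only real subtlety is bookkeeping: the factor reordering needed to interpret $\sP \boxtimes \O_{\Delta_Z}$ as living in $D((Y_1 \times Z) \times (Y_2 \times Z))$ must be compatible with the convolution product used in Proposition \ref{prop:psi}. I would handle this by spelling out once the canonical isomorphism $Y_1 \times Y_2 \times Z \times Z \cong (Y_1 \times Z) \times (Y_2 \times Z)$ and checking that convolution on the latter becomes the external tensor of convolutions on $Y_\bullet$ and $Z$ factors, which is precisely the content of Proposition \ref{prop:psi}. Beyond this, nothing is more than a diagram chase with base change.
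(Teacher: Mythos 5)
Your proposal is correct and follows essentially the same route as the paper: the paper's one-line proof also invokes Proposition \ref{prop:psi} with $Z_1 = Z_2 = Z_3 = Z$ and $\sP' = \sQ' = \Delta_* \O_Z$, resting on precisely the identification $\Psi_Z(\cdot) \cong (\cdot) \boxtimes \Delta_*\O_Z$ you spell out. You simply make explicit what the paper leaves implicit, namely the idempotency $\Delta_*\O_Z \star \Delta_*\O_Z \cong \Delta_*\O_Z$ and the factor-reordering compatibility.
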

\begin{proof}
This follows from Proposition \ref{prop:psi} by taking $Z_1=Z_2=Z$ and $\sP' = \sQ' = \Delta_* \O_Z$ because in this case $\Psi_Z(\cdot) \cong (\cdot) \boxtimes \O_Z$. 
\end{proof}

\begin{Proposition}\label{prop:trace1}
Let $Y_1,Y_2,Y_3$ be three varieties and suppose 
$$\sP \in D(Y_1 \times Y_2) \ \ \ \text{ and } \ \ \ \sQ \in D((Y_2 \times Z) \times (Y_3 \times Z)).$$ 
Then $\Psi'_Z(\sQ \star \Psi_Z(\sP)) \cong \Psi'_Z(\sQ) \star \sP \in D(Y_1 \times Y_3)$. 
\end{Proposition}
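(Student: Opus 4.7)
The plan is to present both sides as pushforwards to $Y_1\times Y_3$ from a common space, via repeated use of the projection formula and base change.

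First, by Proposition \ref{prop:psi} (applied with $\sQ' = \Delta_{Z*}\O_Z$) one has $\Psi_Z(\sP)\cong \sP\boxtimes \Delta_{Z*}\O_Z$, which is supported on the locus where the two $Z$-factors coincide. Unfolding the convolution $\sQ\star\Psi_Z(\sP)$ over the triple product $(Y_1\times Z)\times(Y_2\times Z)\times(Y_3\times Z)$ and applying the projection formula at the closed embedding $i\colon Y_1\times Z\times Y_2\times Y_3\times Z \hookrightarrow (Y_1\times Z)\times(Y_2\times Z)\times(Y_3\times Z)$ forcing the first two $Z$-coordinates to agree, I will collapse the triple product to $Y_1\times Z\times Y_2\times Y_3\times Z$ and obtain $\sQ\star\Psi_Z(\sP)$ as a pushforward to $(Y_1\times Z)\times(Y_3\times Z)$ of an integrand of the form (pullback of $\sP$) tensored with (pullback of $\sQ$, with both its $Z$-coordinates identified).

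Next I apply $\Psi'_Z = \pi_*\Delta^!$. The fibre square of $\Delta\colon Y_1\times Z\times Y_3\hookrightarrow (Y_1\times Z)\times(Y_3\times Z)$ against $p_{13}\circ i$ is cartesian and transverse (by smoothness), with top-left corner $\tilde W:= Y_1\times Z\times Y_2\times Y_3$. Base change moves $\Delta^!$ inside the pushforward, and composing with $\pi$ presents $\Psi'_Z(\sQ\star\Psi_Z(\sP))$ as a pushforward from $\tilde W$ to $Y_1\times Y_3$ along $(y_1,z,y_2,y_3)\mapsto (y_1,y_3)$ of $\tilde\imath^!\bigl((\text{pullback of }\sP)\otimes(\text{pullback of }\sQ)\bigr)$, where $\tilde\imath\colon \tilde W\hookrightarrow Y_1\times Z\times Y_2\times Y_3\times Z$ is the diagonal on the $Z$-factor.

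A parallel unfolding of the right-hand side uses: flat base change to commute $q_{23}^*$ past $\pi_{Q*}$; the projection formula to fold in $q_{12}^*\sP$; and smooth base change to commute the remaining $\Delta_Q^!$ past a pullback. This presents $\Psi'_Z(\sQ)\star\sP$ as a pushforward from $Y_1\times Y_2\times Z\times Y_3$ to $Y_1\times Y_3$ along $(y_1,y_2,z,y_3)\mapsto (y_1,y_3)$ of $\widetilde\Delta^!$ applied to the analogous integrand, where $\widetilde\Delta$ is again the diagonal on the $Z$-factor. Identifying $\tilde W$ with $Y_1\times Y_2\times Z\times Y_3$ by swapping the middle two factors, the two integrands will match on the nose: the $\sP$-pullbacks agree (both are pulled back from $Y_1\times Y_2$); the $\sQ$-parts agree because the compositions $(p_{23}\circ i)\circ\tilde\imath$ and $\rho'\circ\widetilde\Delta$ both send $(y_1,y_2,z,y_3)\mapsto (y_2,z,y_3,z)$; and the shifts and twists from $\tilde\imath^!$ and $\widetilde\Delta^!$ agree since both are diagonals on the $Z$-factor with identical normal bundles. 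The main obstacle is the bookkeeping around base change and $!$-pullbacks, but this remains clean thanks to smoothness and the product structure of all maps involved.
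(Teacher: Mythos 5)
Your proof is essentially the same argument the paper gives: both unfold $\Psi_Z$ and $\Psi'_Z$ into explicit (co)products of pullbacks and pushforwards, then use flat base change and the projection formula to reorganize the integrand until both sides are recognized as pushforwards of the same object over $Y_1\times Y_2\times Z\times Y_3$. The cosmetic differences are that you invoke the $\boxtimes$-presentation $\Psi_Z(\sP)\cong \sP\boxtimes\Delta_{Z*}\O_Z$ from Corollary~\ref{cor:psi} up front and then compare the two sides symmetrically over a common space, whereas the paper's proof runs a one-directional chain of isomorphisms anchored on the two commuting squares (\ref{eq:1})--(\ref{eq:3}) and never names a ``common'' ambient explicitly. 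Both routes rely on the same two facts in the same places: the maps being base-changed against are projections (hence flat, so no Tor-correction is needed), and all the $!$-pullbacks occurring on the two sides are diagonal embeddings in the $Z$-factor, so the $\omega_Z^\vee[-\dim Z]$ twists cancel. In short, this is the paper's proof written in a slightly different order; it is correct, and the only thing worth tightening in a final write-up would be to record the explicit cartesian squares so the base-change applications can be checked line by line, as the paper does.
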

\begin{proof}
For $i,j \in \{1,2,3\}$ denote by $p_{ij}: Y_1 \times Y_2 \times Y_3 \rightarrow Y_i \times Y_j$ and 
$$p'_{ij}: (Y_1 \times Z) \times (Y_2 \times Z) \times (Y_3 \times Z) \rightarrow (Y_i \times Z) \times (Y_j \times Z)$$
the natural projections. We also denote by 
$$\pi_{ij}: Y_i \times Z \times Y_j \rightarrow Y_i \times Y_j \ \ \text{ and } \ \ \Delta_{ij}: Y_i \times Z \times Y_j \rightarrow (Y_i \times Z) \times (Y_j \times Z)$$
the projection and diagonal inclusion. Then 
\begin{eqnarray*}
\Psi'_Z(\sQ \star \Psi_Z(\sP)) 
&\cong& \pi_{13*} \Delta_{13}^* (\sQ \star (\Delta_{12*} \pi_{12}^* \sP)) \\
&\cong& \pi_{13*} \Delta_{13}^* (p'_{13*}(p'^*_{12} \Delta_{12*} \pi_{12}^* \sP \otimes p'^*_{23} \sQ)) \\
&\cong& \pi_{13*} \Delta_{13}^* (p'_{13*}(\Delta'_{12*} p''^*_{12} \pi_{12}^* \sP \otimes p'^*_{23} \sQ)) \\
&\cong& \pi_{13*} \Delta_{13}^* (p'_{13*} \Delta'_{12*} (p''^*_{12} \pi_{12}^* \sP \otimes \Delta'^*_{12} p'^*_{23} \sQ)) \\
&\cong& \pi_{13*} \Delta_{13}^* q_{13*} (p''^*_{12} \pi_{12}^* \sP \otimes p'''^*_{23} \Delta^*_{23} \sQ) \\
&\cong& \pi_{13*} q'_{13*} \Delta_Z^* (p''^*_{12} \pi_{12}^* \sP \otimes p'''^*_{23} \Delta^*_{23} \sQ) 
\end{eqnarray*}
where the third isomorphism follows from the commutative square in (\ref{eq:1}), the fourth via the projection formula, the fifth using $p'_{23} \Delta'_{12} = \Delta_{23} p'''_{23}$ where $p'''_{23}$ is the map 
$$(Y_1 \times Z \times Y_2) \times (Y_3 \times Z) \rightarrow Y_2 \times Z \times Y_3, \ \ \ \ (x_1,z,x_2,x_3,z') \mapsto (x_2,z,x_3)$$ 
and the last from the commutative square in (\ref{eq:2}). 

\begin{equation}\label{eq:1}
\xymatrix{
(Y_1 \times Z \times Y_2) \times (Y_3 \times Z) \ar[d]^{p''_{12}} \ar[rr]^{\Delta'_{12}} & & (Y_1 \times Z) \times (Y_2 \times Z) \times (Y_3 \times Z) \ar[d]^{p'_{12}} \\
Y_1 \times Z \times Y_2 \ar[rr]^{\Delta_{12}} & & (Y_1 \times Z) \times (Y_2 \times Z)
}
\end{equation}
\begin{equation}\label{eq:2}
\xymatrix{
(Y_1 \times Z \times Y_2) \times Y_3 \ar[rr]^{\Delta_Z} \ar[d]^{q'_{13}} & & (Y_1 \times Z \times Y_2) \times (Y_3 \times Z) \ar[d]^{q_{13}} \\
Y_1 \times Z \times Y_3 \ar[rr]^{\Delta_{13}} & & (Y_1 \times Z) \times (Y_3 \times Z)
}
\end{equation}
Now, $\pi_{13} q'_{13} = p_{13} (\pi_{12} \times \id_{Y_3})$ and $\pi_{12} p''_{12} \Delta_Z = p_{12} (\pi_{12} \times \id_{Y_3})$ so we get 
\begin{eqnarray*}
& & \pi_{13*} q'_{13*} \Delta_Z^* (p''^*_{12} \pi_{12}^* \sP \otimes p'''^*_{23} \Delta^*_{23} \sQ) \\ 
&\cong& p_{13*} (\pi_{12} \times \id_{Y_3})_* (\Delta_Z^* p''^*_{12} \pi_{12}^* \sP \otimes \Delta_Z^* p'''^*_{23} \Delta^*_{23} \sQ \otimes \omega_Z^\vee [- \dim Z ]) \\
&\cong& p_{13*} (\pi_{12} \times \id_{Y_3})_* ((\pi_{12} \times \id_{Y_3})^* p_{12}^* \sP \otimes \Delta_Z^* p'''^*_{23} \Delta^*_{23} \sQ \otimes \omega_Z^\vee [- \dim Z]) \\
&\cong& p_{13*} (p_{12}^* \sP \otimes (\pi_{12} \times \id_{Y_3})_* ((p'''_{23} \circ \Delta_Z)^* (\Delta^*_{23} \sQ \otimes \omega_Z^\vee [- \dim Z]))) \\
&\cong& p_{13*} (p_{12}^* \sP \otimes p_{23}^* \pi_{23*} \Delta^*_{23} \sQ) \\ 
&\cong& \Psi'_Z(\sQ) * \sP
\end{eqnarray*}
where the third isomorphism is via the projection formula and the fourth uses (\ref{eq:3}). 
\begin{equation}\label{eq:3}
\xymatrix{
(Y_1 \times Z \times Y_2) \times Y_3 \ar[d]^{p'''_{23} \circ \Delta_Z} \ar[rr]^{\pi_{12} \times \id_{Y_3}} & & Y_1 \times Y_2 \times Y_3 \ar[d]^{p_{23}} \\
Y_2 \times Z \times Y_3 \ar[rr]^{\pi_{23}} & & Y_2 \times Y_3
}
\end{equation}
The result follows. 
\end{proof}


\begin{thebibliography}{E-G-S}

\bibitem[AH]{AH}
M. Abel and M. Hogancamp, Categorified Young symmetrizers and stable homology of torus links II; {\sf arXiv:1510.05330} 

\bibitem[BF]{BF}
K. Behrend and B. Fantechi, Gerstenhaber and Batalin-Vilkovisky structures on Lagrangian intersections, Algebra, arithmetic, and geometry: in honor of Yu. I. Manin. Vol. I, \textit{Progr. Math.} \textbf{269} (2009), 1--47, Birkh\"auser, Boston, Inc., Boston, MA. 

\bibitem[BG]{BG}
V. Baranovsky and V. Ginzburg, Gerstenhaber-Batalin-Vilkovisky structures on coisotropic intersections, \textit{Math. Res. Lett} \textbf{17} (2010), no. 2, 221--229; {\sf arXiv:0907.0037} 

\bibitem[C1]{C1}
S. Cautis, Clasp technology to knot homology via the affine Grassmannian, \textit{Math. Ann.} \textbf{363} (2015), no. 3, 1053--1115; \textsf{arXiv:1207.2074}

\bibitem[C2]{C2}
S. Cautis, Rigidity in higher representation theory; {\sf arXiv:1409.0827}

\bibitem[CDK]{CDK}
S. Cautis, C. Dodd and J. Kamnitzer, Associated graded of Hodge modules and categorical $\sl_2$ actions; {\sf arXiv:1603.07402}

\bibitem[CK1]{CK1} 
S. Cautis and J. Kamnitzer, Knot homology via derived categories of coherent sheaves I, $\sl_2$ case, \textit{Duke Math. J.} \textbf{142} (2008), no. 3, 511--588. \textsf{math.AG/0701194}

\bibitem[CK2]{CK2} 
S. Cautis and J. Kamnitzer, Braiding via geometric categorical Lie algebra actions, \textit{Compos. Math.} \textbf{148} (2012), no. 2, 464--506; \textsf{arXiv:1001.0619}

\bibitem[CKM]{CKM}
S. Cautis, J. Kamnitzer, and S. Morrison, Webs and quantum skew Howe duality, \textit{Math. Ann.} \textbf{360} no. 1 (2014), 351--390; \textsf{arXiv:1210.6437}

\bibitem[CL]{CL}
S. Cautis and A. Lauda, Implicit structure in 2-representations of quantum groups, \textit{Selecta Math.} \textbf{21} no. 1 (2015), 201--244; {\sf arXiv:1111.1431}

\bibitem[DGR]{DGR}
N. Dunfield, S. Gukov and J. Rasmussen, The superpolynomial for knot homologies, \textit{Experiment. Math.} \textbf{15} (2006), no. 2, 129--159; {\sf arXiv:math/0505662}


\bibitem[GS]{GS}
S. Gukov and M. Sto\v{s}i\'{c}, Homological algebra of knots and BPS states, Proceedings of the Freedman Fest, \textit{Geom. Topol. Monogr.} \textbf{18}, 309--367, Geom. Topol. Publ., Coventry; {\sf arXiv:1112.0030}

\bibitem[H]{H}
M. Hogancamp, Stable homology of torus links via categorified Young symmetrizers I: one-row partitions; {\sf arXiv:1505.08148} 

\bibitem[K]{K}
M. Khovanov, Triply-graded link homology and Hochschild homology of Soergel bimodules, \textit{Intern. J. of Math.} \textbf{18} (2007), 869--885. {\sf arXiv:math/0510265} 

\bibitem[KL]{KL}
M. Khovanov and A. Lauda, A diagrammatic approach to categorification of quantum groups III, \textit{Quantum Topol.} \textbf{1}, Issue 1 (2010), 1--92; \textsf{arXiv:0807.3250}

\bibitem[KR1]{KR1}
M. Khovanov and L. Rozansky, Matrix factorizations and link homology II, \textit{Geom. Topol.} \textbf{12} (2008) 1387--1425; \textsf{math.QA/0505056}

\bibitem[KR2]{KR2}
M. Khovanov and L. Rozansky, Positive half of the Witt algebra acts on triply graded link homology, \textit{Quantum Topol.}, no. 4 (2016), 737--795; \textsf{arXiv:1305.1642}

\bibitem[KK]{KK}
N. Kowalzig and U. Krahmer, Batalin-Vilkovisky structures on Ext and Tor, \textit{J. Reine Angew. Math.}, \textbf{697} (2014), 159--219; \textsf{arXiv:1203.4984}

\bibitem[L]{L}
G. Lusztig, Introduction to quantum groups, \textit{Progress in Mathematics} \textbf{110}, Birkh\"auser Boston Inc., Boston, MA, 1993.

\bibitem[MSV]{MSV}
M. Mackaay, M. Sto\v{s}i\'{c} and P. Vaz, The 1,2-coloured HOMFLY-PT link homology, \textit{Trans. Amer. Math. Soc.} \textbf{363} (2011), 2091--2124; {\sf arXiv:0809.0193} 

\bibitem[Ra]{Ra}
J. Rasmussen, Some differentials on Khovanov-Rozansky homology, \textit{Geom. Topol.}, \textbf{19} (2015), no. 6, 3031--3104; {\sf arXiv:math/0607544}

\bibitem[Rou]{Rou}
R. Rouquier, 2-Kac-Moody Algebras; \textsf{arXiv:0812.5023}

\bibitem[Roz]{Roz}
L. Rozansky, An infinite torus braid yields a categorified Jones-Wenzl projector; \textit{Fund. Math.} \textbf{225} (2014), 305--326; \textsf{arXiv:1005.3266}

\bibitem[WW]{WW}
B. Webster and G. Williamson, A geometric construction of colored HOMFLYPT homology, \textit{Geom. Topol.} (to appear); {\sf arXiv:0905.0486}

\bibitem[W]{W}
P. Wedrich, Exponential growth of colored HOMFLY-PT homology; \textsf{arXiv:1602.02769}


\end{thebibliography}
\end{document}